\theoremstyle{plain}
\newtheorem{theorem}{Theorem}[section]
\newtheorem{corollary}[theorem]{Corollary}
\newtheorem{lemma}[theorem]{Lemma}
\newtheorem{conjecture}[theorem]{Conjecture}
\theoremstyle{definition}
\newtheorem{definition}[theorem]{Definition}
\newtheorem{remark}[theorem]{Remark}
\newtheorem{example}[theorem]{Example}
\newcommand{\AS}{{\rm{AS}}}
\newcommand{\SAS}{{\rm{SAS}}}
\newcommand{\alt}{{\rm{alt}}}
\begin{document}

\title{On knot semigroups and Gelfand-Kirillov dimensions.}
\author{Toshinori Miyatani}
\date{}
\address{Graduate School of Science, Hokkaido University, Sapporo, 004-0022, Japan}
\email{miyatani@math.sci.hokudai.ac.jp}

\begin{abstract}
A knot semigroup is defined by A. Vernitski. A. Vernitski conjectured that the knot semigroup of the 2-bridge knot is isomorphic to an alternating sum semigroup. To support this conjecture, and as a first main result, we prove that the knot semigroup of the double twist knot is isomorphic to an alternating sum semigroup. Moreover, as a second main result, we construct a link invariant constructed by the Gelfand-Kiriliov dimension of algebra.
\end{abstract}

\keywords{knots; semigroups; knot semigroups; double twist knots; 2-bridge knots; Gelfand-Kirillov dimensions}

\maketitle{}
\tableofcontents

\section{Introduction}
We shall consider the theory of knot semigroups. A knot semigroup is a cancellative semigroup whose defining relations come in pairs of the form $xy=yx$ and $yx=zy$ from crossing points of a knot diagram. This construction is similar to the Wirtinger presentation of the knot group. The knot semigroups are defined by A. Vernitski. A. Vernitski proved that torus knots and twist knots are isomorphic to alternating sum semigroups and conjectured that 2-bridge knot is isomorphic to an alternating sum semigroup \cite{V1}. As a main result, and to support this conjecture, we shall prove that the double twist knot is isomorphic to an alternating sum semigroup. Next, we consider the growth of knot semigroups. To investigate the growth of knot semigroups, we use a growth function of semigroups and the Gelfand-Kirillov dimension of semigroup algebra. As a second main result, we construct a link invariant constructed by the Gelfand-Kirillov dimension of algebra. This research is a first step connecting knot theory and semigroup theory.

This paper is organized as follows. In section 2, we recall standard definitions. In section 2.1, we recall a knot semigroup defined by A. Vernitski. In section 2.2, we review an alternating sum semigroup. In section 2.3, we recall a definition of the 2-bridge knot and its properties. In section 3, we describe examples of knot semigroups proved by A. Vernitski. In section, 3.1 we see that the knot semigroup of a trivial knot is the semigroup of the set of natural numbers. In section 3.2, we recall that the   knot semigroup of the torus knot is an alternating sum semigroup. In section 3.3, we review that the knot semigroup of the twist knot is isomorphic to an alternating sum semigroup. In section 4.2, we state the first main result. In section 4.3, we shall prove that the knot semigroup of the double twist knot is isomorphic to an alternating sum semigroup. In section 5 we discuss a growth function of knot semigroups. In section 6.1 we review the Gelfand-Kirillov dimensions. In section 6.3 we compute examples.
\section{Preliminaries}
\subsection{Knot semigroups}
We recall a knot semigroup defined by A. Vernitski \cite{V1}. First we define the cancellative semigroups.
\begin{definition}
A semigroup $S$ is called cancellative if it satisfies two conditions : if $xz=yz$ then $x=y$, and if $xy=xz$ then $y=z$ for $x, y, z \in S$.
\end{definition}
\begin{wrapfigure}[8]{r}[20pt]{5cm}
\includegraphics[keepaspectratio,width=5cm,bb=0 0 250 120]{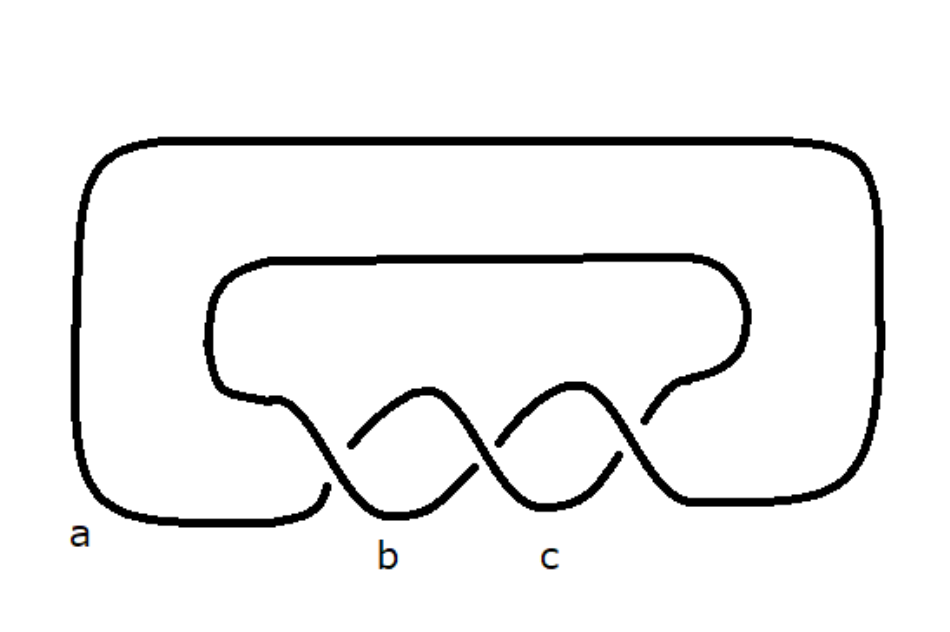}
\begin{center}Figure 1 : $T(2,3)$\end{center}
\end{wrapfigure}
By an arc we mean a continuous line on a knot diagram from one undercrossing to another undercrossing.
For example, consider the knot diagram $T(2,3)$ on figure 1.
It has three arcs, denoted by $a, b$, and $c$. 

Let $K$ be a knot diagram. We shall define a semigroup, which we call the knot semigroup of $K$, and denote by $M(K)$. We assume that each arc is denoted by a letter. Then we define two defining relations $xy=yz$ and $yx=zy$ at crossing, where arcs $x$ and $z$ form the undercrossing and arc $y$ is the overcrossing. We define these relations at every crossing. The cancellative semigroup generated by arc letters with these defining relations is the knot semigroup of the knot diagram. This construction is the analogy of the Wirtinger presentation of knot group. \cite{W}.

\begin{wrapfigure}[7]{r}[20pt]{5cm}
\includegraphics[keepaspectratio,width=5cm,bb=0 0 250 120]{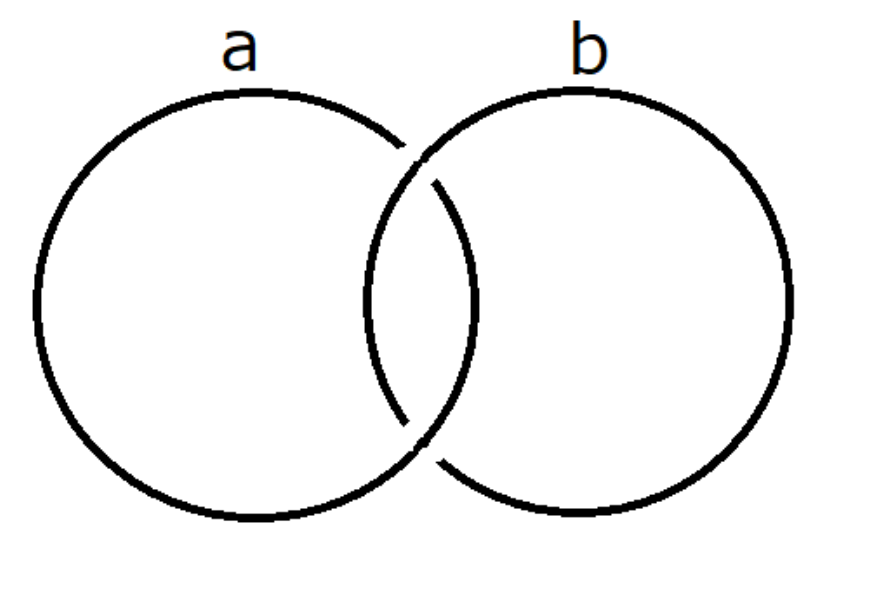} 
\begin{center}Figure 2 : Hopf links\end{center}
\end{wrapfigure}
The definition of the knot semigroup naturally generalizes from diagrams of knots to diagrams of links. For example, the diagram of the Hopf link in Figure 2 contains two arcs $a, b$ and two crossings, each defining a single relation $ab=ba$. Hence, its knot semigroup is a free commutative semigroup with two generators. 

\subsection{Alternating sum semigroups}
We shall recall an alternating sum semigroup defined by A. Vernitski \cite{V1}. Let $G$ be either $\mathbb{Z}_n$ or $\mathbb{Z}$. Let $B$ be a subset of $G$, and $B^+$ the set of words of B. By the alternating sum of a word $b_1b_2b_3b_4\dots b_k \in B^+$ we shall define the value of $b_1-b_2+b_3-b_4+\dots+(-1)^{k+1}b_k$ calculated in $G$. The notation ${\rm{alt}}(b_1b_2\dots b_k)$ denote the value, i.e.,
\begin{equation*}
{\rm{alt}}(b_1b_2\dots b_k)=b_1-b_2+b_3-b_4+\dots+(-1)^{k+1}b_k.
\end{equation*}
We also define the following notation. $|b_1b_2\dots b_k|$ denote the length of the word $b_1b_2\dots b_k$. We shall say that two words $u,v\in B^+$ are in relation $\sim$ if and only if
\begin{enumerate}
\item[(1)]$|u|=|v|$,
\item[(2)]${\rm{alt}}(u)={\rm{alt}}(u)$.
\end{enumerate}
The relation $\sim$ is a congruence on $B^+$. The semigroup $\AS(G, B)$ denote the factor semigroup $B^+/\sim$. $\AS(G, B)$ is called an alternating sum semigroup. 

We shall also recall a strong alternating sum semigroup defined by A. Vernitski \cite{V1}. The sets $G$ and $B$ are as above. Let us say that $g\in G$ is even (resp. odd) in $G$ if $g$ can be represented in the form $g=2h$ (resp. $g=2h+1$) for some $h\in G$. Let $w\in B^+$. The notation $|w|_e$ denote the number of entries in $w$ which are even in $G$. We shall say that two words $u, v \in B^+$ are in relation $\approx$ if and only if 
\begin{enumerate}
\item[(1)]$|u|=|v|$,
\item[(2)]$\alt(u)=\alt(v)$,
\item[(3)]$|u|_e=|v|_e$.
\end{enumerate}  
The relation $\approx$ is a congruence on $B^+$. The semigroup $\SAS(G, B)$ denote the factor semigroup $B^+/\approx$. $\SAS(G, B)$ is called a strong alternating sum semigroup. 
\subsection{2-bridge knots}
\subsubsection{Definitions of 2-bridge knots}
We recall the 2-bridge knot. We first define a bridge number of a knot diagram.
\begin{definition}
Suppose that $D$ is a knot diagram of a knot (or link) $K$. If we can divide up $D$ into $2n$ polygonal curves $\alpha_1,\alpha_2,\dots,\alpha_n$ and $\beta_1,\beta_2,\dots,\beta_n$, i.e.,  
\begin{equation*}
D=\alpha_1\cup\alpha_2\cup\dots\cup\alpha_n\cup\beta_1\cup\beta_2\cup\dots\cup\beta_n
\end{equation*}
that satisfy the conditions given by the followings, then the bridge number of $D$, ${\rm{br}}(D)$, is said to be at most $n$ and denoted by ${\rm{br}}(D)\le n$. 
\begin{enumerate}
\item[(1)]$\alpha_1,\alpha_2,\dots,\alpha_n$ are mutually disjoint, simple polygonal curves.
\item[(2)]$\beta_1,\beta_2,\dots,\beta_n$ are also mutually disjoint, simple curves.
\item[(3)]At the crossing points of $D$, $\alpha_1,\alpha_2,\dots,\alpha_n$ are segments that passes over the crossing points. While at the crossing points of $D$, $\beta_1,\beta_2,\dots,\beta_n$ are segments that pass under the crossing points.
\end{enumerate}
If ${\rm{br}}\le n$ but ${\rm{br}}\nleq n-1$, then we define ${\rm{br}}(D)=n$.
\end{definition}
The bridge number of a knot diagram $D$ is not a knot invariant for a knot $K$. But we have the following theorem.
\begin{theorem}
Let $K$ be a knot or a link. Then the number ${\rm{br}}(K)=\displaystyle \min_\mathcal{D}$ is an invariant for $K$, where $\mathcal{D}$ is the set of all knot diagrams of $K$. The number ${\rm{br}}(K)$ is called the bridge number of $K$.
\end{theorem}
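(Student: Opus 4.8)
The statement is close to definitional, so the plan is to isolate the two points that genuinely require verification: that the quantity $\min_\mathcal{D} {\rm{br}}(D)$ is well-defined, meaning the minimum is actually attained, and that it depends only on the knot type of $K$. First I would fix notation, writing $\mathcal{D} = \mathcal{D}(K)$ for the set of all diagrams representing $K$ and $\mathcal{B}(K) = \{ {\rm{br}}(D) : D \in \mathcal{D}(K) \}$ for the corresponding set of bridge numbers, so that ${\rm{br}}(K) = \min \mathcal{B}(K)$.

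To see that the minimum exists, I would argue that $\mathcal{B}(K)$ is a nonempty set of positive integers. It is nonempty because $K$ has at least one diagram. For any fixed diagram $D$, one always obtains a valid bridge decomposition by taking each overpassing segment as one of the $\alpha_i$ and the complementary underpassing portions as the $\beta_j$; since $D$ has finitely many crossings, this yields a decomposition into finitely many arcs, so ${\rm{br}}(D)$ is a finite positive integer (with the convention ${\rm{br}}(D) = 1$ in the crossingless case, $n = 1$). Thus $\mathcal{B}(K) \subseteq \mathbb{Z}_{> 0}$ is nonempty, and the well-ordering principle guarantees a least element, which is precisely ${\rm{br}}(K)$.

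For invariance, the decisive observation is that $\mathcal{D}(K)$ depends only on the equivalence class of $K$: if $K$ and $K'$ are equivalent (related by a finite sequence of Reidemeister moves, equivalently by an ambient isotopy), then a given diagram represents $K$ if and only if it represents $K'$, whence $\mathcal{D}(K) = \mathcal{D}(K')$ and therefore $\mathcal{B}(K) = \mathcal{B}(K')$. Taking minima over identical sets gives ${\rm{br}}(K) = {\rm{br}}(K')$, so ${\rm{br}}(K)$ is a knot invariant. I expect the only mildly delicate step to be the well-definedness of the minimum—confirming that ${\rm{br}}(D)$ is always a finite positive integer for a legitimate diagram—since invariance is then immediate from the fact that the defining minimum is taken over the entire diagram class rather than over any single chosen representative.
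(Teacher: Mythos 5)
The paper never proves this theorem: it is stated as a known fact from the knot theory literature (see the textbooks of Kawauchi \cite{K} and Murasugi \cite{M} in the bibliography), so there is no argument in the paper to compare yours against. Your proof is correct, and it is the standard one. The two points you isolate are exactly the ones requiring verification. For well-definedness, the set $\mathcal{B}(K)$ of bridge numbers of diagrams of $K$ is a nonempty subset of $\mathbb{Z}_{>0}$ (every diagram has finitely many crossings, hence admits a finite bridge decomposition, e.g.\ by taking a small overpassing arc at each crossing as one of the $\alpha_i$ and the complementary underpassing arcs as the $\beta_j$), so the well-ordering principle yields a least element. For invariance, your observation is the decisive one: since $\mathcal{D}(K)$ is the set of all diagrams representing the equivalence class of $K$, one has $\mathcal{D}(K)=\mathcal{D}(K')$ whenever $K\simeq K'$, and minima over identical sets coincide. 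Note that this tautological invariance depends on reading $\mathcal{D}$ as the diagrams of the knot \emph{type}; if one instead took $\mathcal{D}$ to be the projections of a single fixed embedding, invariance would require Reidemeister's theorem to relate diagrams of equivalent embeddings, so your choice of reading is the one that makes the argument clean, and it is the intended one. A last pedantic remark: the statement as printed, ${\rm br}(K)=\min_{\mathcal{D}}$, omits the quantity being minimized; your interpretation $\min_{D\in\mathcal{D}}{\rm br}(D)$ is what the paper means.
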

Let $K$ be a knot or a link. If ${\rm{br}}(K)=2$, then $K$ is called a 2-bridge knot.
\subsubsection{Conway's normal forms}
Any 2-bridge knot has a presentation, which can be deformed as in figure, where $a_i$ indicates $|a_i|$ $(\neq 0)$ crossing points with sign $\epsilon_i=a_i/|a_i|=\pm 1$. 
\begin{center}
\includegraphics[keepaspectratio,width=5cm,bb=170 0 420 130]{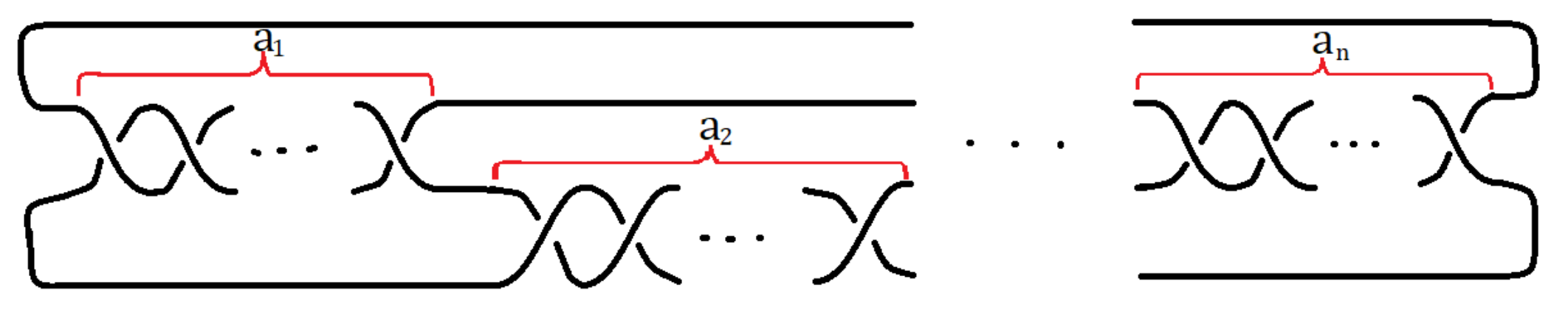}
\end{center}
\begin{center}
($n$ is even) 
\end{center}
\begin{center}
\includegraphics[keepaspectratio,width=5cm,bb=170 0 420 130]{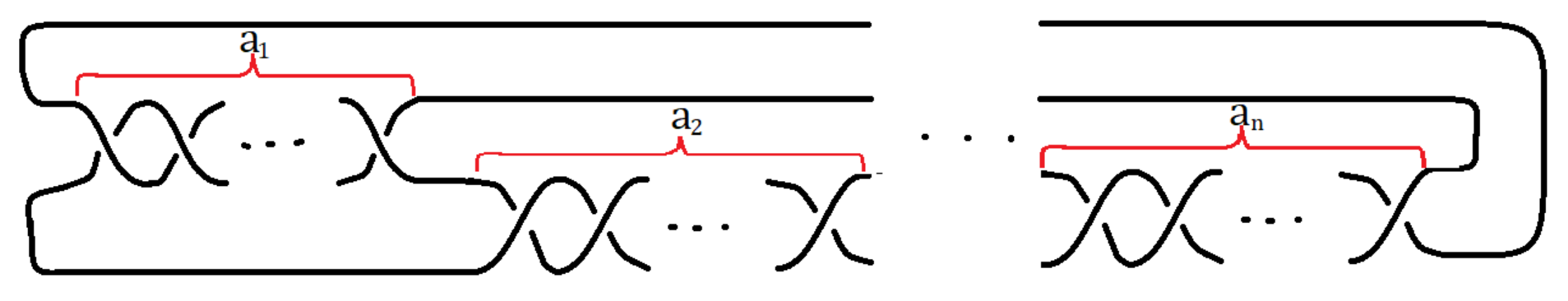}
\end{center}
\begin{center}
($n$ is odd) 
\end{center}
We denote the 2-bridge knot with this knot diagram by $C(a_1,a_2,\dots,a_n)$, which is called Conway's normal form.

\section{Examples of knot semigroups}
\subsection{Trivial knots}
\begin{wrapfigure}[6]{r}[20pt]{5cm}
\includegraphics[keepaspectratio,width=4cm,bb=0 0 250 150]{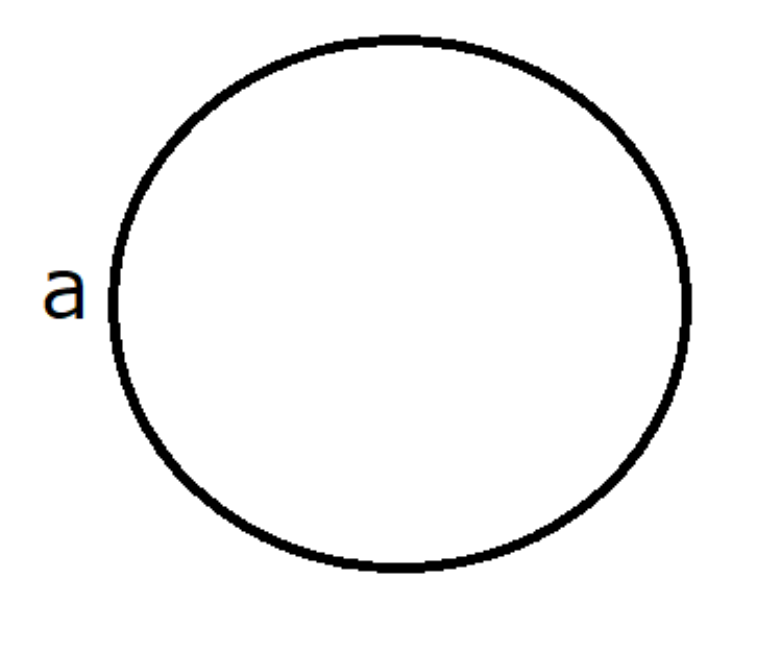}
\begin{center}Figure3 : Trivial knots\end{center}
\end{wrapfigure}
Let $\mathbb{N}$ be the semigroup of positive integers. The semigroup $\mathbb{N}$ is a cancellative semigroup. The diagram of the trivial knot contains one arc and no crossings (Figure 3). Therefore, its knot semigroup is isomorphic to the semigroup $\mathbb{N}$. 
\subsection{Torus knots and torus links}
\begin{wrapfigure}[10]{r}[20pt]{6cm}
\includegraphics[keepaspectratio,width=4cm,bb=0 0 250 220]{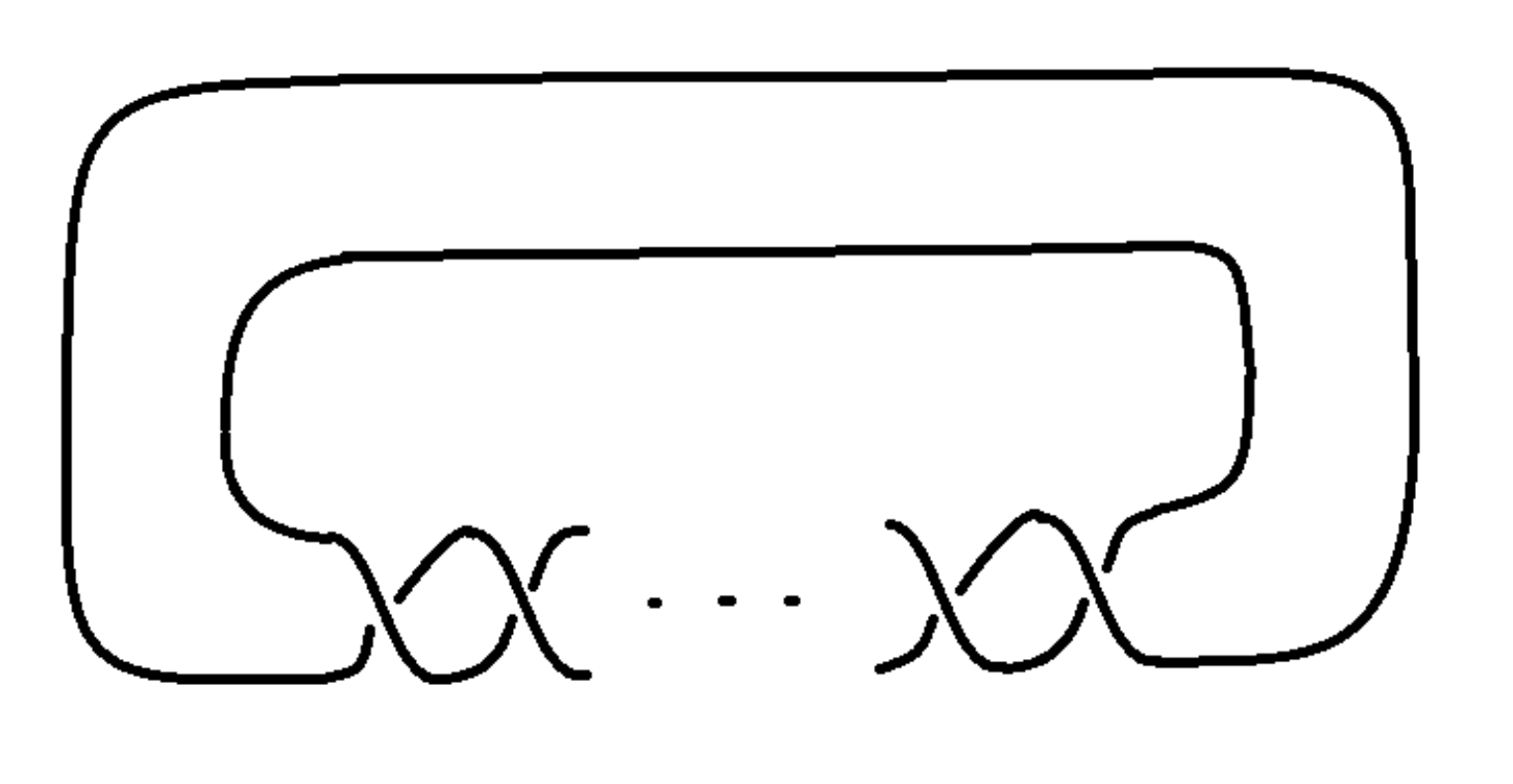}
\begin{center}Figure4 : Torus knots\end{center}
\end{wrapfigure}
A torus knot $T(2, n)$ consists of $n$ half-twists (Figure 4.). We recall the knot semigroup $M(T(2, n))$ of a knot diagram $T(2, n)$ (with an odd $n$) and the knot semigroup of a link diagram $T(2, n)$ (with an even $n$) proved by A. Vernitski \cite{V1}. 
\begin{theorem}[\cite{V1} Theorem 3.] \label{torusthm}
Let n be an odd integer. The knot semigroup $M(T(2, n))$ of the torus knot diagram $T(2,n)$ is isomorphic to the alternating sum semigroup $\AS(\mathbb{Z}_n,\mathbb{Z}_n)$.
\end{theorem}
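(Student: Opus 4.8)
The plan is to build the isomorphism explicitly on generators and then force bijectivity by a normal-form argument in which cancellativity does the essential work. First I would fix a convenient labelling of the arcs. Tracing $T(2,n)$ as the closure of the $2$-braid $\sigma_1^n$, one checks that the diagram is alternating with exactly $n$ arcs, and that at each crossing the over-arc together with the two under-arcs can be coloured by residues $i,j,k\in\mathbb{Z}_n$ with $i+j\equiv 2k$. Since $n$ is odd, $2$ is invertible modulo $n$, and such a colouring can be chosen to be a bijection between the arc set and $\mathbb{Z}_n$; relabelling the generators as $b_0,\dots,b_{n-1}$ accordingly, the defining relations of $M(T(2,n))$ take the clean form, for every $k\in\mathbb{Z}_n$,
\begin{equation*}
b_{k+1}b_k=b_kb_{k-1},\qquad b_kb_{k+1}=b_{k-1}b_k,
\end{equation*}
with indices read modulo $n$.

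Next I would define $\phi\colon M(T(2,n))\to\AS(\mathbb{Z}_n,\mathbb{Z}_n)$ on generators by $b_i\mapsto i$. It helps to record that $\AS(\mathbb{Z}_n,\mathbb{Z}_n)$ is isomorphic to the set $\{(\ell,s):\ell\ge 1,\ s\in\mathbb{Z}_n\}$ with product $(\ell_1,s_1)(\ell_2,s_2)=(\ell_1+\ell_2,\ s_1+(-1)^{\ell_1}s_2)$, which is visibly cancellative. Both sides of each defining relation have length $2$ and alternating sum $1$ (respectively $-1$), so the relations hold in the target; since $\AS(\mathbb{Z}_n,\mathbb{Z}_n)$ is cancellative and $M(T(2,n))$ is the universal cancellative semigroup on these generators and relations, $\phi$ is a well-defined homomorphism. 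It is surjective because the single letters $i$ generate $\AS(\mathbb{Z}_n,\mathbb{Z}_n)$ and each is $\phi(b_i)$.

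The content is injectivity, for which I would show that any two words of equal length and equal alternating sum are already equal in $M(T(2,n))$. I would proceed in two stages. First, a length-two \emph{singleton} lemma: $b_ib_j=b_kb_l$ whenever $i-j\equiv k-l$. For difference $\pm1$ this is the relations themselves. For difference $0$ one proves all squares coincide: the relations alone give $b_{k+1}^2b_k=b_kb_{k-1}^2$ and, by rewriting with one relation and then the other, $b_{k+1}^2b_k=b_kb_{k+1}^2$; cancelling $b_k$ yields $b_{k-1}^2=b_{k+1}^2$, and since $n$ is odd, stepping the index by $2$ reaches every residue, so all $b_i^2$ are equal to a single (central) element. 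The remaining differences are treated by induction on $d$: to pass from difference $d-1$ to difference $d$, left-multiply the desired identity $b_ib_{i-d}=b_{i-1}b_{i-1-d}$ by $b_{i-1}$, rewrite one factor by a relation, apply the difference-$(d-1)$ case to the resulting adjacent pair, collapse a square to the central element, and cancel $b_{i-1}$. Second, once length-two classes are singletons, any adjacent pair $b_ib_j$ may be replaced by $b_0b_{j-i}$, and sweeping a word from left to right reduces it to the normal form $b_0^{\ell-1}b_t$ with $t$ determined by the alternating sum. As $\phi$ separates distinct pairs $(\ell,t)$, these normal forms are pairwise distinct, so $\phi$ is a bijection.

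I expect the cancellation steps to be the main obstacle and to be exactly where the hypothesis that $n$ is odd is genuinely used. These identifications \emph{fail} in the semigroup presented by the same relations without imposing cancellativity: for example $b_0^2$ and $b_1^2$ are connected by no sequence of relations, since no relation applies to an adjacent pair of equal letters and neither word can be embedded in a longer context. Hence the argument cannot avoid exploiting the cancellative structure, and the parity of $n$ enters precisely at the step forcing all squares to agree.
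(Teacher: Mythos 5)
Your proof is correct: the relabelled relations are exactly the defining relations of $M(T(2,n))$; the identity $b_{k+1}^2b_k=b_kb_{k+1}^2$ does follow by pure rewriting (both sides rewrite to the intermediate word $b_{k+1}b_{k+2}b_{k+1}$), so together with $b_{k+1}^2b_k=b_kb_{k-1}^2$ and left cancellation one gets $b_{k-1}^2=b_{k+1}^2$, and oddness of $n$ then forces all squares to coincide; the induction on the difference and the sweep to the normal form $b_0^{\ell-1}b_t$ close the argument. Bear in mind that the paper itself does not prove this statement (it is quoted from \cite{V1}); the relevant internal comparison is the proof of Theorem \ref{main}, which recovers this theorem as the case $l=1$ (see the remark following that theorem). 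Measured against that proof, your route is essentially the same: your length-two singleton lemma is the counterpart of Lemma \ref{lemma1}, whose proof likewise hinges on establishing $a_0a_0=a_1a_1$ by a parity argument plus cancellation, and your normal forms $b_0^{\ell-1}b_t$ play the role of the paper's canonical words of the form $c00^{t-2}$, with your left-to-right sweep replacing the defect-ordering induction. The only repackaging is at the interface: you verify that $\AS(\mathbb{Z}_n,\mathbb{Z}_n)$ is cancellative via the explicit pair model $(\ell,s)$ and invoke the universal property of the least cancellative congruence, where the paper instead routes everything through Vernitski's criterion (Lemma \ref{mainlemma}); these are equivalent mechanisms. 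Your closing observation that $b_0^2\neq b_1^2$ in the semigroup presented by the same relations without cancellativity is also correct, and it accurately locates where both the cancellative hypothesis and the parity of $n$ genuinely enter.
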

\begin{theorem}[\cite{V1} Theorem 13.]
Let n be an even integer. The knot semigroup $M(T(2, n))$ of the torus link diagram $T(2, n)$ is isomorphic to the strong alternating sum semigroup $\SAS(\mathbb{Z}_n,\mathbb{Z}_n)$.
\end{theorem}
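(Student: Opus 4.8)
The plan is to mirror the proof of Theorem~\ref{torusthm} (the odd, single-component case), modifying it to account for the fact that for even $n$ the diagram $T(2,n)$ is a two-component link and, arithmetically, that parity becomes a well-defined function on $\mathbb{Z}_n$. First I would read the presentation of $M(T(2,n))$ off the diagram: the $n$ crossings produce $n$ undercrossings and hence $n$ arcs, which I label $x_0,x_1,\dots,x_{n-1}$ with indices in $\mathbb{Z}_n$. Reading the paired relations $xy=yz$, $yx=zy$ at each crossing yields, for every $i\in\mathbb{Z}_n$, relations of the form $x_i x_{i+1}=x_{i+1}x_{i+2}$ and $x_{i+1}x_i=x_{i+2}x_{i+1}$; these reflect the label $x_i$ across the overcrossing label $x_{i+1}$ to give $x_{i+2}=2(i{+}1)-i$, which is exactly the relation pattern underlying Theorem~\ref{torusthm}. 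I would then define the candidate isomorphism $\phi\colon M(T(2,n))\to\SAS(\mathbb{Z}_n,\mathbb{Z}_n)$ on generators by $\phi(x_i)=[i]$, the $\approx$-class of the one-letter word $i$.

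Next I would check that $\phi$ is a well-defined homomorphism by verifying that each defining relation is respected by all three $\SAS$-invariants. The length and alternating-sum checks are identical to the knot case: both sides of each relation are length-$2$ words, and $\alt(i,i{+}1)=-1=\alt(i{+}1,i{+}2)$ (and likewise $\alt(i{+}1,i)=1=\alt(i{+}2,i{+}1)$). The genuinely new point is the third invariant $|\cdot|_e$: because $n$ is even, parity is well defined on $\mathbb{Z}_n$, and the two entries of each relation word are consecutive residues, so exactly one of them is even. Hence both sides of every relation contain exactly one even entry, $|\cdot|_e$ is preserved, and $\phi$ descends to $M(T(2,n))$. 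Surjectivity is immediate, since each generator $[i]$ of $\SAS(\mathbb{Z}_n,\mathbb{Z}_n)$ is the image of $x_i$.

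The main work, and the step I expect to be the principal obstacle, is injectivity: I must show that the triple $(\text{length},\ \alt,\ |\cdot|_e)$ is a \emph{complete} invariant, i.e. that any two arc-words carrying the same three data are already equal in $M(T(2,n))$. My plan is to establish a normal form for the elements of $M(T(2,n))$ and to prove, using the defining relations together with the cancellativity built into the definition of a knot semigroup, that every word can be transported to the normal form determined by its three invariants. Concretely I would use $x_i x_{i+1}=x_{i+1}x_{i+2}$ to ``slide'' adjacent letters and collect a word into a canonical shape, checking at each move that length, alternating sum, and even-count are unchanged. The feature distinguishing this from the knot case is precisely that the even-count cannot be altered by any relation (again because $n$ is even), so it must appear as an independent third coordinate; the heart of the argument is to show that it is the \emph{only} additional constraint, so that the fibres of $\phi$ are singletons.

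Finally, I would record the compatibility relation that every realizable triple must satisfy: reducing the alternating sum modulo $2$ gives $\alt(w)\equiv |w|-|w|_e \pmod 2$, which holds automatically for genuine arc-words and matches on both sides. This confirms that the normal forms are in bijection with the elements of $\SAS(\mathbb{Z}_n,\mathbb{Z}_n)$, so that $\phi$ is a bijection and hence an isomorphism. I expect the entire difficulty to be concentrated in the normal-form reduction for injectivity; well-definedness and surjectivity are routine once the parity observation for even $n$ is in place.
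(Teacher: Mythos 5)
First, a remark on the comparison itself: the paper does not actually prove this statement --- it is recalled from \cite{V1} (Theorem 13) without proof --- so the closest in-paper benchmark is the proof of the analogous Theorem \ref{main}. Your overall strategy (label the arcs by residues, send $x_i\mapsto i$, verify that both sides of each crossing relation have equal length, equal alternating sum, and --- using that parity is well defined on $\mathbb{Z}_n$ when $n$ is even --- equal number of even entries, then conclude via canonical forms and Lemma \ref{mainlemma}) is exactly the strategy of \cite{V1} and of the paper's Theorem \ref{main}, and your observation that $\alt(w)\equiv |w|-|w|_e \pmod 2$ is the compatibility constraint among the three invariants is correct. One small point you gloss over: the map descends to $M(T(2,n))$ only because $\SAS(\mathbb{Z}_n,\mathbb{Z}_n)$ is itself cancellative, since the congruence defining the knot semigroup is the least \emph{cancellative} congruence containing the crossing relations, not the least congruence.

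The genuine gap is that the injectivity step, which you yourself identify as the heart of the argument, is only announced, never carried out --- and in this family of results that step is essentially the entire proof. Compare the paper's treatment of Theorem \ref{main}: it requires Lemmas \ref{lemma1}, \ref{trans2}, and \ref{trans1} just to derive, from the crossing relations plus cancellativity, translation identities such as $a_ia_{i+j}=a_{i+k}a_{i+j+k}$ (obtained by cancelling in three-letter products), followed by an explicit definition of canonical words, a defect function with a reduction lemma (Lemma \ref{lem2} and the lemma after it), and a case analysis showing each class contains exactly one canonical word. Your sketch supplies none of the corresponding ingredients: you never say what the canonical words of $\SAS(\mathbb{Z}_n,\mathbb{Z}_n)$ are (you need exactly one word for each triple $(t,s,e)$ with $0\le e\le t$ and $s\equiv t-e \pmod 2$); you never derive the two-letter identities valid in the even case --- note that $x_ix_j=x_{i+k}x_{j+k}$ with $k$ odd can only hold when $i$ and $j$ have opposite parity, since otherwise $|\cdot|_e$ changes, so the admissible ``slide'' moves are genuinely more constrained than in the odd case of Theorem \ref{torusthm}; and you never prove that two words sharing all three invariants reduce to the same canonical word. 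Until those steps are written out, what you have is a correct plan plus the routine part (well-definedness and surjectivity), not a proof.
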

Since $\SAS(\mathbb{Z}_n,\mathbb{Z}_n)=\AS(\mathbb{Z}_n,\mathbb{Z}_n)$ for odd values of $n$, we have the following corollary.
\begin{corollary}[\cite{V1} Corollary 14.]
The knot semigroup $M(T(2, n))$ of the diagram $T(2, n)$ for every positive $n$ is isomorphic to the strong alternating sum semigroup $\SAS(\mathbb{Z}_n,\mathbb{Z}_n)$.
\end{corollary}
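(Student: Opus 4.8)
The plan is to dispose of the corollary by combining the two preceding theorems and then reducing the strong alternating sum semigroup to the ordinary one in the odd case. First I would split into two cases according to the parity of $n$. When $n$ is even there is nothing to prove: the statement $M(T(2,n)) \cong \SAS(\mathbb{Z}_n,\mathbb{Z}_n)$ is exactly the content of the even-$n$ theorem already established. So the entire work lies in the odd case, where Theorem \ref{torusthm} gives $M(T(2,n)) \cong \AS(\mathbb{Z}_n,\mathbb{Z}_n)$, and what remains is to identify $\AS(\mathbb{Z}_n,\mathbb{Z}_n)$ with $\SAS(\mathbb{Z}_n,\mathbb{Z}_n)$.

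The key observation concerns the notion of parity inside $G=\mathbb{Z}_n$ for odd $n$. Recall that $g \in \mathbb{Z}_n$ is called even in $G$ when $g=2h$ for some $h \in \mathbb{Z}_n$. When $n$ is odd we have $\gcd(2,n)=1$, so $2$ is a unit modulo $n$ and the doubling map $h \mapsto 2h$ is a bijection of $\mathbb{Z}_n$ onto itself. Consequently every element of $\mathbb{Z}_n$ is even in $\mathbb{Z}_n$. I would isolate this as the single substantive step of the argument.

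With this in hand the reduction is immediate. For any word $w \in B^+$ with $B=\mathbb{Z}_n$, every entry of $w$ is even in $G$, so $|w|_e = |w|$. Hence for two words $u,v$ the extra condition $|u|_e = |v|_e$ appearing in the definition of $\approx$ is automatically implied by $|u|=|v|$, and the defining conditions of $\approx$ collapse onto those of $\sim$. Thus the two congruences coincide on $B^+$, giving $\SAS(\mathbb{Z}_n,\mathbb{Z}_n) = \AS(\mathbb{Z}_n,\mathbb{Z}_n)$ for odd $n$. Combining this equality with the odd-$n$ isomorphism from Theorem \ref{torusthm} yields $M(T(2,n)) \cong \SAS(\mathbb{Z}_n,\mathbb{Z}_n)$ in the odd case as well, completing the proof.

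I would not expect a genuine obstacle here; the corollary is essentially formal once the parity remark is made. The one point requiring care is the definition-level verification that ``even in $\mathbb{Z}_n$'' is vacuous for odd $n$: a reader might wrongly import the integer notion of evenness rather than the one given for a general group $G$. Making that distinction explicit, and thereby justifying that condition (3) in the definition of $\approx$ becomes redundant, is the one place where the argument could otherwise go astray.
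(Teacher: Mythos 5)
Your proof is correct and follows essentially the same route as the paper, which likewise combines the odd-$n$ theorem with the even-$n$ theorem via the identity $\SAS(\mathbb{Z}_n,\mathbb{Z}_n)=\AS(\mathbb{Z}_n,\mathbb{Z}_n)$ for odd $n$. The only difference is that the paper asserts this identity without justification, whereas you supply the (correct) reason: $2$ is a unit modulo odd $n$, so every element of $\mathbb{Z}_n$ is even in $\mathbb{Z}_n$ and condition (3) of $\approx$ is vacuous.
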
 
\subsection{Twist knots}
\begin{wrapfigure}[7]{r}[20pt]{5cm}
\includegraphics[keepaspectratio,width=4.5cm,bb=0 0 250 130]{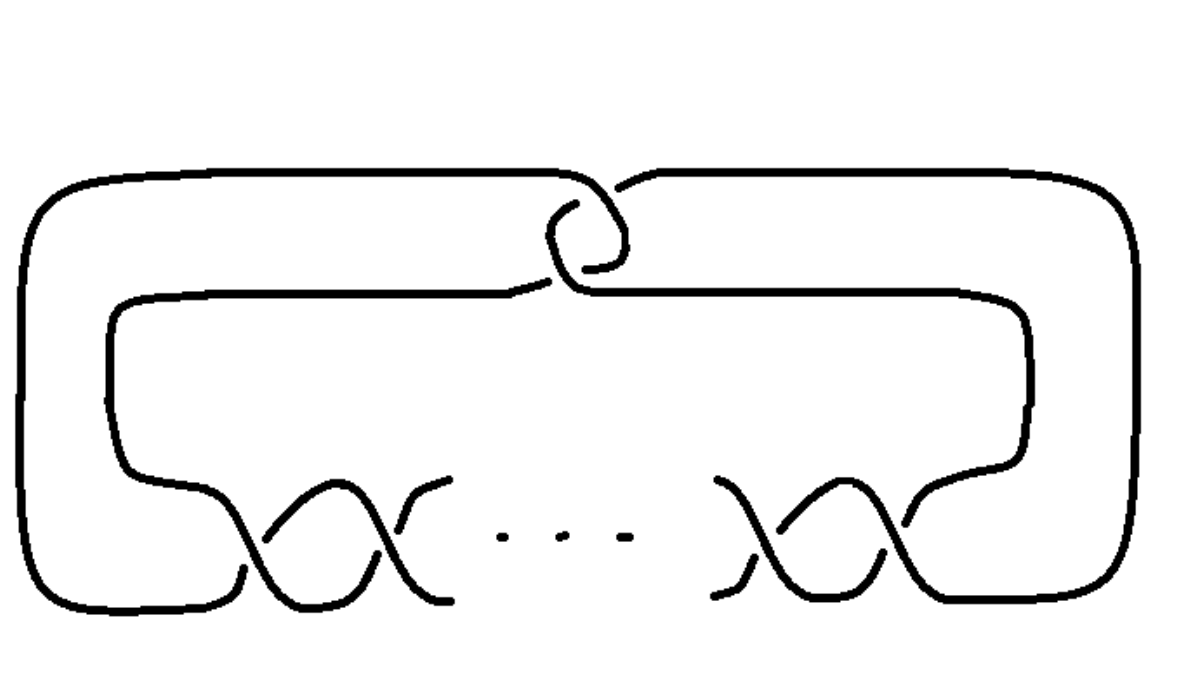}
\setlength{\parskip}{-3mm} 
\begin{center}Figure5 : Twist knots\end{center}
\end{wrapfigure}
A twist knot, which we shall denote $\mathfrak{tw}_n$ consists of $n$ clockwise half-twists and $2$ anticlockwise half-twists (Figure 5). We recall the knot semigroup $M(\mathfrak{tw}_n)$ of a knot diagram $\mathfrak{tw}_n$ proved by A. Vernitski \cite{V1}. The notation $[n+2]$ denote the set $\{0,1,\dots,n+2\}$.
\begin{theorem}[\cite{V1} Theorem 15.] \label{twistsemigroup}
The knot semigroup $M(\mathfrak{tw}_n)$ of the twist knot diagram $\mathfrak{tw}_n$ is isomorphic to the alternating sum semigroup $\AS(\mathbb{Z}_{2n+1},[n+2])$.
\end{theorem}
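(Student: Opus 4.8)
The plan is to construct the isomorphism explicitly by labelling the arcs of $\mathfrak{tw}_n$ with residues in $\mathbb{Z}_{2n+1}$ drawn from $[n+2]$, and then to identify the resulting semigroup with $\AS(\mathbb{Z}_{2n+1},[n+2])$. The starting observation is purely local: in an alternating sum semigroup a single crossing relation is governed by arithmetic. For letters $x,y,z$ one has $xy=yz$ in $\AS(G,B)$ if and only if $|xy|=|yz|$ (automatic) and $\alt(xy)=\alt(yz)$, i.e.\ $x-y=y-z$, which is equivalent to $z=2y-x$ in $G$; the companion relation $yx=zy$ unwinds to the very same equation. Thus if I can assign to each arc $a$ a label $\lambda(a)\in[n+2]$ so that at every crossing with over-arc $y$ and under-arcs $x,z$ the congruence $\lambda(z)\equiv 2\lambda(y)-\lambda(x)\pmod{2n+1}$ holds, then $a\mapsto\lambda(a)$ extends to a well-defined homomorphism $\phi\colon M(\mathfrak{tw}_n)\to\AS(\mathbb{Z}_{2n+1},[n+2])$. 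Here I use that $\AS(\mathbb{Z}_{2n+1},[n+2])$ is cancellative, so that a map from the free semigroup respecting the crossing relations is automatically forced to respect the cancellative congruence defining $M(\mathfrak{tw}_n)$.

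First I would fix such a labelling. Tracing the arcs through the $n$ half-twists and then through the two-crossing clasp, the transmission rule $z=2y-x$ becomes a linear recurrence along the twist region, which the clasp then closes up. The requirement that the labels agree when the diagram is traversed once around is exactly the global consistency condition, and carrying it out should show both that the recurrence is consistent precisely modulo $2n+1$ and that the labels occurring are the elements of $[n+2]$. This simultaneously yields surjectivity of $\phi$: every generator of $\AS(\mathbb{Z}_{2n+1},[n+2])$, being a length-one word $b\in[n+2]$, is the image of an arc carrying the label $b$, so one only needs to verify that the labelling exhausts $[n+2]$.

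The substance of the proof is injectivity, i.e.\ that the relations of $M(\mathfrak{tw}_n)$ are \emph{complete} for the pair of invariants $(\,$length$,\alt)$: whenever two words $u,v$ in the arcs satisfy $|u|=|v|$ and $\alt(\lambda(u))=\alt(\lambda(v))$ in $\mathbb{Z}_{2n+1}$, they must already coincide in $M(\mathfrak{tw}_n)$. I would establish this by a normal-form and rewriting argument in the spirit of the torus-knot case (Theorem~\ref{torusthm}): read each crossing relation $xy=yz$ as a length- and $\alt$-preserving move that rewrites an adjacent pair of letters, and show that these moves suffice to transport any word to a canonical representative determined solely by its length and its alternating sum. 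The heart of this is a reachability lemma asserting that the moves act transitively on the set of words of fixed length and fixed alternating sum; this is exactly where the arithmetic of $[n+2]$ inside $\mathbb{Z}_{2n+1}$ enters, since one must verify that any residue can be adjusted by $\pm$-steps through available labels without leaving the set of realizable arc-labels.

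The step I expect to be the main obstacle is this reachability/transitivity lemma, and in particular the handling of the clasp. The two anticlockwise crossings are what fold the naive infinite (integer) alternating-sum pattern of a plain twist region down to the cyclic group $\mathbb{Z}_{2n+1}$, and one must check both that the rewriting moves available there genuinely realize \emph{every} identification predicted by $\sim$ and that they introduce \emph{no further} collapse beyond it. Once transitivity is established, injectivity follows, and together with surjectivity it gives the desired isomorphism $M(\mathfrak{tw}_n)\cong\AS(\mathbb{Z}_{2n+1},[n+2])$.
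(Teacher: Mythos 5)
Your overall plan coincides with the approach the paper takes. Note first that the paper does not prove Theorem \ref{twistsemigroup} directly: it quotes it from \cite{V1} and recovers it as the special case $l=2$ of Theorem \ref{main}, and the proof of that theorem follows exactly your outline --- a labelling of arcs by residues, the observation that the defining relations preserve length and alternating sum (so that, by Lemma \ref{mainlemma}, a homomorphism is induced), and a canonical-word argument for bijectivity. Your point that cancellativity of the alternating sum semigroup reduces well-definedness to checking only the crossing relations is correct, and is implicit in the paper's use of Lemma \ref{mainlemma}. One repair is needed in your surjectivity step: $\mathfrak{tw}_n$ has $n+2$ arcs, whereas $[n+2]=\{0,1,\dots,n+2\}$ as defined in this paper has $n+3$ elements once $n\ge 2$, so no labelling can exhaust it; the generating set that actually arises (and which Theorem \ref{main} produces at $l=2$) is $\{0,1,\dots,n+1\}$. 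This is an off-by-one in the paper's notation rather than your error, but your argument must be run with the corrected set.

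The genuine gap is in your injectivity step, and it is not only that the ``reachability lemma'' is left unproved: as you formulate it, it is false. You propose to use the crossing relations themselves as adjacent-pair rewriting moves and to show that these moves act transitively on the set of words of fixed length and fixed alternating sum. But every defining relation of $M(\mathfrak{tw}_n)$ has two distinct letters on each side, so no such move applies to the two-letter word $a_0a_0$ at all; its class under pure rewriting is a singleton. Yet $a_1a_1$ has the same length and the same alternating sum ($\alt(00)=\alt(11)=0$), so injectivity forces $a_0a_0=a_1a_1$ in $M(\mathfrak{tw}_n)$. That equality does hold, but only because the knot semigroup is the quotient by the \emph{cancellative closure} of the crossing relations: one multiplies by an auxiliary letter, rewrites using the relations, and then cancels. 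Deriving such identities is where the real work lies, and it is what the paper's proof of Theorem \ref{main} spends most of its length on: Lemmas \ref{lemma1}--\ref{trans1} establish, by repeated appeal to ``the cancellative rule'', families of equalities such as $a_ia_{i+j}=a_{i+k}a_{i+j+k}$ and $a_0a_0=a_1a_1$, and only then does the defect-reduction lemma carry out the normal-form argument using these \emph{derived} moves rather than the raw crossing relations. So your plan must be amended: state the transitivity claim for the cancellative congruence, and first build the stock of cancellation-derived relations. That step --- not the clasp bookkeeping you flag --- is the technical heart of the theorem, and it is precisely what your proposal defers.
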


\section{Knot semigroups of double twist knots}
\subsection{A conjecture of the knot semigroups of  2-bridge knots}
The torus knots and the twist knots are the 2-bridge knots. Then we have the following conjecture by A. Vernitski (\cite{V1} Conjecture 23.).
\begin{conjecture} \label{bridgeconj}
The knot semigroup of the 2-bridge knot is isomorphic to an alternating sum semigroup.
\end{conjecture}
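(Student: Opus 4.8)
The plan is to embed $M(K)$ into an explicit alternating sum semigroup by giving each arc a numerical value and then upgrading this labelling to an isomorphism. The observation that drives everything is that, in $\AS(G,B)$, the length-two words $xy$ and $yz$ are equal if and only if $\alt(xy)=\alt(yz)$, that is $x-y=y-z$, which is the same as $z=2y-x$; the same computation shows that $yx=zy$ is equivalent to the identical relation $z=2y-x$. Hence both defining relations of $M(K)$ at a crossing with overpass $y$ and under-arcs $x,z$ are exactly the Fox-colouring (dihedral quandle) relation $x+z=2y$. So the first step is to solve this system of reflection relations over $\mathbb{Z}_p$, where $p$ is the determinant of $K$, namely the numerator of the continued fraction $[a_1,\dots,a_n]$ attached to the Conway form $C(a_1,\dots,a_n)$, and to record the resulting set $B\subseteq\mathbb{Z}_p$ of arc values.

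Working from the Conway normal form, I would trace the arc values region by region. Inside a single twist region $a_i$ the successive crossings iterate the reflection $x\mapsto 2y-x$, so the arc values there form an arithmetic progression whose common difference is fixed by the two strands entering the region. Propagating these progressions through the continued fraction produces a closed description of every arc value modulo $p$, and therefore of $B$. This is precisely the pattern already visible in the known cases: $B=\mathbb{Z}_n$ for the torus knot $T(2,n)$, $B=[n+2]\subseteq\mathbb{Z}_{2n+1}$ for the twist knot $\mathfrak{tw}_n$, and the analogous set computed for the double twist knot in this paper. With $B$ in hand, the map $\varphi\colon M(K)\to\AS(\mathbb{Z}_p,B)$ sending each arc to its value is a well-defined homomorphism, because every defining relation of $M(K)$ maps to a genuine equality and $\AS$ is cancellative; surjectivity then reduces to checking that each admissible pair of a length and an alternating sum is realised by a word in the arc values.

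The hard part, and the real content of the conjecture, is injectivity: one must prove that the only coincidences in $M(K)$ are those forced by equal length and equal alternating sum, so that $\varphi$ collapses nothing further. Equivalently one needs a normal form for $M(K)$ guaranteeing that two words with the same image under $\varphi$ are already equal. For torus and twist knots this is done by explicit rewriting, and the present paper supplies such an argument by hand for the double twist knot, where only two twist regions occur. The obstacle in the general 2-bridge case is that the number of regions $n$ is unbounded, so one cannot simply enumerate cases; instead I would set up an induction on $n$, peeling off the last twist region and invoking the isomorphism for the shorter Conway form $C(a_1,\dots,a_{n-1})$ as the inductive hypothesis. Verifying that this reduction preserves cancellativity and introduces no relations beyond the alternating-sum ones — in particular that the change of modulus $p$ as one shortens the continued fraction stays compatible with the labelling — is the crux, and is what separates the already-settled double twist case from the full conjecture.
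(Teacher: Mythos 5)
This statement is Conjecture~\ref{bridgeconj}: the paper itself does \emph{not} prove it. It remains open there, and the paper only offers supporting evidence, namely Theorem~\ref{main} for the double twist knots $C(l,n)$ under the parity hypothesis that $nl$ is even. So there is no paper proof to compare against, and the relevant question is whether your argument closes the conjecture. It does not: what you have written is a correct reconstruction of the known strategy, followed by an explicit admission that its decisive step is missing. The first half of your proposal (both crossing relations in $\AS(G,B)$ reduce to the Fox-colouring identity $x+z=2y$, the modulus should be the determinant of the Conway form, and the arc labelling induces a homomorphism into $\AS(\mathbb{Z}_p,B)$ because $\AS$ is cancellative) is sound and is exactly how Lemma~\ref{mainlemma} is deployed for $T(2,n)$, $\mathfrak{tw}_n$, and $\mathfrak{dtw}_n^l$. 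But the entire content of the conjecture is the converse direction --- that no equalities hold in $M(K)$ beyond those of equal length and equal alternating sum --- and for that you propose an induction on the number of twist regions without carrying out a single step of it.

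Two concrete reasons why that induction, as formulated, cannot work. First, the modulus is not stable under shortening the Conway form: $C(a_1,\dots,a_n)$ and $C(a_1,\dots,a_{n-1})$ have unrelated determinants (e.g.\ $C(l,n)$ has determinant $ln+1$ while $C(l)$ has determinant $l$, and neither divides the other in general), so there is no homomorphism between $\AS(\mathbb{Z}_{p'},B')$ and $\AS(\mathbb{Z}_p,B)$, nor between the corresponding knot semigroups, through which an inductive hypothesis could be transported; moreover deleting a twist region merges arcs globally, so the generators of the smaller semigroup are not a subset of the generators of the larger one. Second, $M(K)$ is the quotient of $A^+$ by the \emph{cancellative} congruence generated by the crossing relations, which is strictly larger than the congruence they generate: equalities such as $a_0a_0=a_1a_1$ in Lemma~\ref{lemma1} are consequences of cancellation, not of the defining relations alone, and controlling exactly which such equalities appear is what the canonical-word (defect-reduction) machinery of Section~4.3 accomplishes for two twist regions. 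An inductive argument would have to show that shortening the Conway form creates no new cancellation-forced identifications, and this is precisely the point you defer. In short, your proposal is a reasonable research plan that matches the paper's general method, but it proves nothing beyond the cases already settled, and the statement remains a conjecture.
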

\subsection{Double twist knots}
To support the Conjecture \ref{bridgeconj} we prove that the knot semigroup of the double twist knot is isomorphic to an alternating sum semigroup.
\begin{center}
\includegraphics[keepaspectratio,width=4.5cm,bb=60 0 310 350]{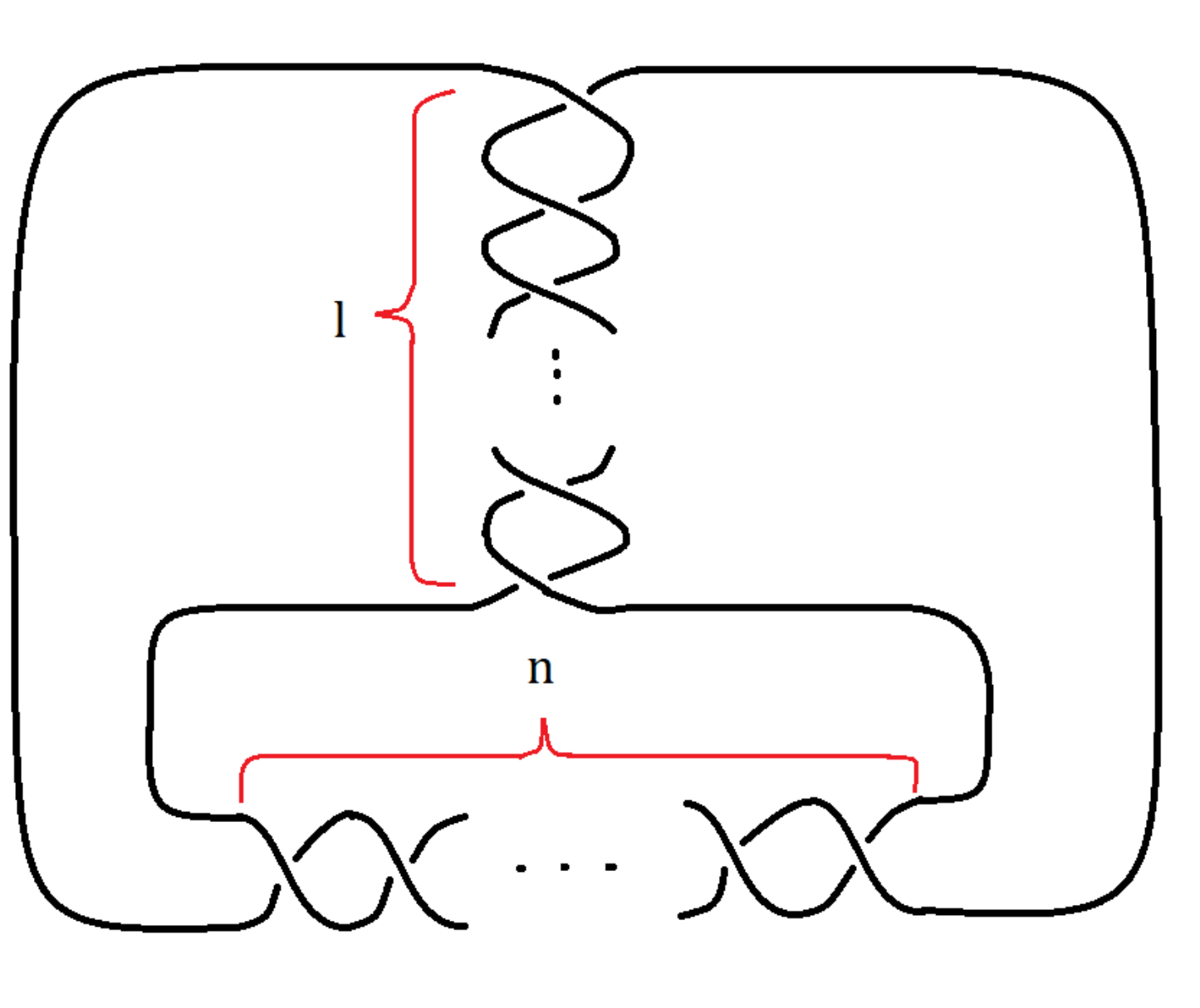}
\setlength{\parskip}{-3mm} 
\end{center}
\begin{center}Figure6 : Double twist knots\end{center}
A twist knot, which we shall denote $\mathfrak{dtw}_n^l$ consists of $n$ clockwise half-twists and $l$ anticlockwise half-twists, where $l,n$ indicate the number of crossing points (Figure 5). Then we have the following theorem.
\begin{theorem} \label{main}
Let $n,l \geq 1$ be integers. Suppose the integer $nl$ is an even integer. Then the knot semigroup $M(\mathfrak{dtw}_n^l)$ of the double twist knot diagram $\mathfrak{dtw}_n^l$ is isomorphic to the alternating sum semigroup
\begin{equation*}
\AS(\mathbb{Z}_{ln+1},\{0,1,\dots,n,0\cdot n+1,1\cdot n+1,\dots (l-1)\cdot n+1\}).
\end{equation*}
\end{theorem}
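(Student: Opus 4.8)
The plan is to follow the template established for the torus and twist knots (Theorems~\ref{torusthm} and \ref{twistsemigroup}) and to exhibit the isomorphism through an explicit labelling of the diagram. First I would fix the diagram $\mathfrak{dtw}_n^l$, which has $n+l$ crossings and therefore $n+l$ arcs, and assign to each arc a label in $\mathbb{Z}_{ln+1}$ subject to the reflection rule $z = 2y-x$ imposed at every crossing whose overarc is $y$ and whose underarcs are $x$ and $z$. This rule is exactly the condition under which the defining relations persist in an alternating sum semigroup, since $xy=yz$ forces $\alt(xy)=x-y$ to equal $\alt(yz)=y-z$. Propagating two normalized bridge labels through the two twist regions, one finds that the labels occurring in the $n$-region form the progression $0,1,\dots,n$ and those in the $l$-region form the progression $1,n+1,2n+1,\dots,(l-1)n+1$, so the set of realized labels is precisely
\[
B=\{0,1,\dots,n\}\cup\{\,jn+1: 0\le j\le l-1\,\}.
\]
Since the two progressions overlap only in the common value $1$, one has $|B|=(n+1)+l-1=n+l$, so the labelling is a bijection between arcs and $B$; in particular distinct arcs never carry the same label.

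With this labelling I would define $\psi\colon M(\mathfrak{dtw}_n^l)\to\AS(\mathbb{Z}_{ln+1},B)$ by sending each arc to its label and extending multiplicatively. The relations $xy=yz$ and $yx=zy$ preserve length and, by the reflection rule, preserve the alternating sum, so they map to valid equalities of $\sim$-classes; since $\AS(\mathbb{Z}_{ln+1},B)$ is itself cancellative, the universal property of the cancellative semigroup $M(\mathfrak{dtw}_n^l)$ makes $\psi$ a well-defined homomorphism. Surjectivity is immediate, as every element of $B$ is realized as the label of an arc and the length-one classes generate $\AS(\mathbb{Z}_{ln+1},B)$. Here the hypothesis that $nl$ is even enters: it makes the modulus $ln+1$ odd, so $2$ is invertible in $\mathbb{Z}_{ln+1}$, every element is even in the sense of $\SAS$, and consequently $\SAS(\mathbb{Z}_{ln+1},B)=\AS(\mathbb{Z}_{ln+1},B)$ --- exactly as in the passage from the torus link to the torus knot.

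Because the labelling identifies arcs with $B$ bijectively, $\psi$ is really a comparison of two congruences on the free semigroup on $n+l$ letters: the congruence defining $M(\mathfrak{dtw}_n^l)$ and the congruence $\sim$. Well-definedness gives one inclusion, and injectivity is the reverse one, namely that any two words of equal length and equal alternating sum are already equal in $M(\mathfrak{dtw}_n^l)$. I would prove this by a normal-form argument: using $xy=yz$ to transport a letter across its neighbour (which reflects the moved label), and using the cancellativity of $M(\mathfrak{dtw}_n^l)$ to strip common prefixes and suffixes, I would reduce an arbitrary word to a canonical representative determined solely by its length and its alternating sum. Matching canonical forms would then show $\psi$ injective, completing the isomorphism.

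The hard part will be this normal-form reduction. For the single twist region of the torus and twist knots the reduction was available to Vernitski, but here two progressions of different common difference --- step $1$ in the $n$-region and step $n$ in the $l$-region --- meet along the junction arcs, and I must show that the rewriting moves supplied by the two blocks of relations, taken together, are rich enough to realize every label adjustment compatible with $B$ while preserving length and alternating sum. Concretely, the obstacle is to prove that the interaction across the junction leaves no relation beyond $\sim$ surviving, so that the two congruences coincide; establishing this interaction, rather than the formal bookkeeping around it, is the crux of the proof.
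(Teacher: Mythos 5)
Your setup---labelling arcs by the reflection rule $z=2y-x$, obtaining the label set $B=\{0,1,\dots,n\}\cup\{jn+1 : 0\le j\le l-1\}$, getting the homomorphism $\psi$ from length and alternating-sum preservation plus cancellativity of the target, and reducing injectivity to a normal-form argument---is exactly the paper's strategy, which runs through Vernitski's Lemma~\ref{mainlemma}. But your proposal stops precisely where the paper's proof begins. Essentially all of the work in the paper is the reduction you defer: it proves, inside $M(\mathfrak{dtw}_n^l)$, the two-letter translation identities $a_ia_{i+j}=a_{i+k}a_{i+j+k}$ in the $n$-region (Lemma~\ref{lemma1}) and $a_{in+1}a_{(i+j)n+1}=a_{(i+k)n+1}a_{(i+j+k)n+1}$ in the $l$-region, then the junction-crossing identities $a_{pn+1}a_qa_{rn+1}=a_{(p+1)n+1}a_qa_{(r-1)n+1}$ and $a_pa_qa_r=a_{p+1}a_qa_{r-1}$ (Lemmas~\ref{trans2} and \ref{trans1}), and finally runs an induction on a ``defect'' parameter, with a case analysis of roughly a dozen configurations, to push every word to one of the canonical forms $000^{t-2}$, $c00^{t-2}$, $0c0^{t-2}$, $dc0^{t-2}$, followed by a separate case analysis showing that distinct canonical words are never $\sim$-equivalent. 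None of this is formal bookkeeping: the odd-step translations (e.g.\ $a_0a_0=a_1a_1$ and $a_1a_1=a_{n+1}a_{n+1}$) are exactly where the two twist regions interact, and they are obtained by routing words around the whole diagram, not by local moves within one region. Acknowledging that this is ``the crux'' does not discharge it; as written, your proposal establishes only the homomorphism direction, which was already available from Vernitski's general lemma.

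Relatedly, your diagnosis of where the hypothesis that $nl$ is even enters is off. You place it at the level of $\SAS(\mathbb{Z}_{ln+1},B)=\AS(\mathbb{Z}_{ln+1},B)$ (oddness of the modulus makes every element even). That equality is true, but it is not what the paper needs: the parity hypothesis is consumed inside the rewriting lemmas, in establishing the square identities. The paper proves $a_0a_0=a_1a_1$ by one argument when $l$ is even and by a different one when $l$ is odd and hence $n$ is even, and similarly $a_1a_1=a_{n+1}a_{n+1}$ splits into parity cases. These identities shift labels by an odd step, and without $nl$ even they genuinely fail: for $nl$ odd the diagram is a two-component link, and its knot semigroup retains the extra invariant counted by $\SAS$, so no argument of your form could succeed. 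A correct completion of your plan must therefore thread the parity assumption through the normal-form reduction itself, not merely through the identification of the target semigroup.
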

\begin{remark}
The double twist knot is the $2$-bridge knot $C(l,n)$.
\end{remark}
\begin{remark}
Let $l=2$ in Theorem \ref{main}. Then 
\begin{equation*}
\{0,1,\dots,0\cdot n+1,1\cdot n+1\}=[n+2].
\end{equation*}
Thus Theorem \ref{main} implies Theorem \ref{twistsemigroup}.
\end{remark}
\begin{remark}
Let $l=1$ in Theorem \ref{main}. Then
\begin{equation*}
\begin{split}
M(\mathfrak{dtw}_n^1)&\simeq \AS(\mathbb{Z}_{n+1},\{0,1,\dots,n,0\cdot n+1\}) \\
&\simeq \AS(\mathbb{Z}_{n+1},\{0,1,\dots,n\}) \\
&\simeq \AS(\mathbb{Z}_{n+1},\mathbb{Z}_{n+1}).
\end{split}
\end{equation*}
On the other hand, $\mathfrak{dtw}_n^1\simeq T(2,n+1)$ as knots. By Theorem \ref{torusthm}, 
\begin{equation*}
M(T(2,n+1))\simeq \AS(\mathbb{Z}_{n+1},\mathbb{Z}_{n+1}).
\end{equation*}
Thus Theorem \ref{main} holds in the case of $l=1$.
\end{remark}
\begin{remark}
We consider the following knot $C(m,l,n)$.
\begin{center}
\includegraphics[keepaspectratio,width=4.5cm,bb=100 0 350 157]{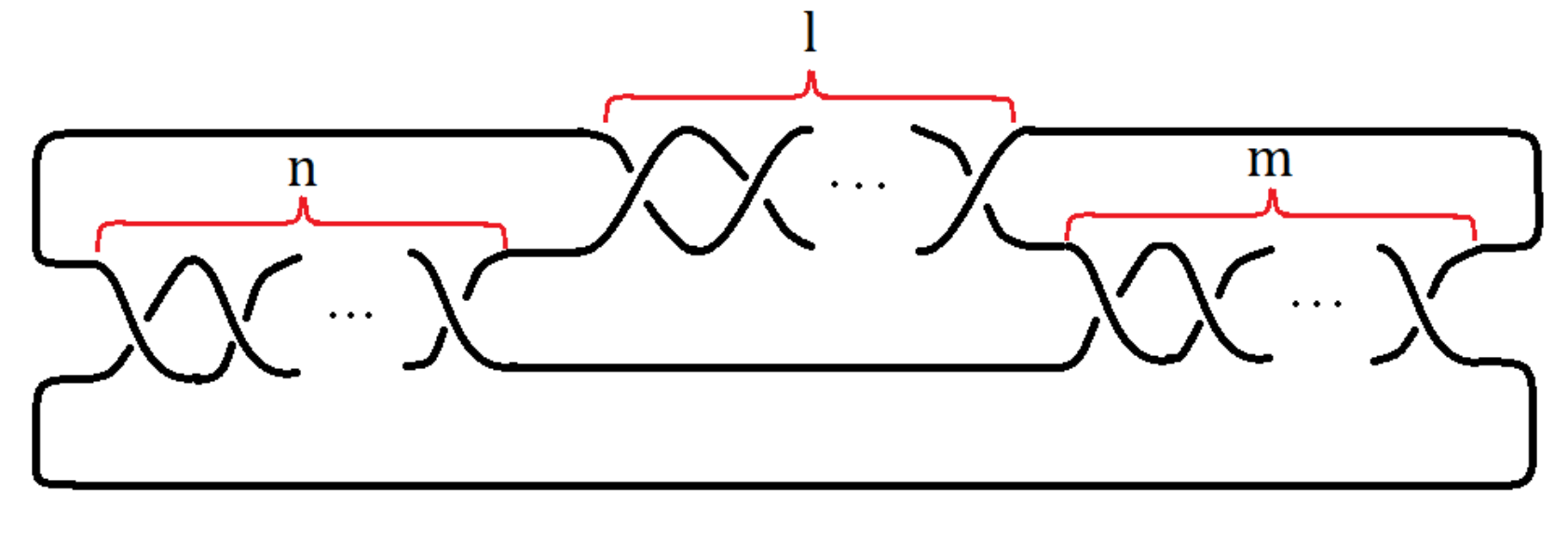}
\end{center}
Then we have the following conjecture.
\begin{conjecture}
Let $l,m,n\ge 1$ be integers. Suppose the integer $(ml+1)n+m$ is odd. Then the knot semigroup $M(C(m,l,n))$ of the knot diagram $C(m,l,n)$ is isomorphic to the alternating sum semigroup
\begin{equation*}
\AS(\mathbb{Z}_{(ml+1)n+m},\displaystyle\bigcup_{i=0}^{n+1}\{i\}\cup\displaystyle\bigcup_{j=0}^{l+1}\{j n+1\}\cup\displaystyle\bigcup_{k=0}^{m-1}\{(kl+1)n+k\}).
\end{equation*}
\end{conjecture}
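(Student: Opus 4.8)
The plan is to imitate the proof of Theorem~\ref{main}, upgrading the two-region argument to three regions by constructing an explicit homomorphism $\varphi\colon M(C(m,l,n))\to \AS(\mathbb{Z}_N,B)$, where $N=(ml+1)n+m$ and $B$ is the displayed generating set, and then showing it is an isomorphism. The number $N$ is the numerator of the continued fraction
\begin{equation*}
[n,l,m]=n+\cfrac{1}{\,l+\cfrac{1}{m}\,}=\frac{(ml+1)n+m}{ml+1},
\end{equation*}
hence the determinant of the $2$-bridge knot $C(m,l,n)$, and its oddness is exactly the condition making $C(m,l,n)$ a knot rather than a link (the analogue of the $nl$-even hypothesis in Theorem~\ref{main}). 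The key observation is that the defining relation $xy=yz$ is equivalent in $\AS(\mathbb{Z}_N,B)$ to $\alt(xy)=\alt(yz)$, i.e. to $z=2y-x$ in $\mathbb{Z}_N$, the familiar colouring rule at a crossing. I would therefore fix Conway's normal form for $C(m,l,n)$, label the arcs region by region, and assign to each arc its colour under a fixed $N$-colouring; propagating colours through each twist region as an arithmetic progression, one checks that the three summands of $B$ are precisely the colour sets produced --- $\{0,\dots,n+1\}$ in the $n$-region, $\{jn+1\}$ in the $l$-region, and $\{(kl+1)n+k\}$ in the $m$-region --- and that the values agree at the two junctions. The closure of the colouring around the whole diagram is exactly where the value $N=(ml+1)n+m$ is forced, and this makes $\varphi$ a well-defined homomorphism.

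Surjectivity is then immediate, since every element of $B$ is the colour of some arc and $B$ generates $\AS(\mathbb{Z}_N,B)$. The real content is injectivity: if $u,w\in M(C(m,l,n))$ satisfy $|u|=|w|$ and have equal alternating sum in $\mathbb{Z}_N$, they must already coincide in $M(C(m,l,n))$. Following the double-twist case, I would prove this by exhibiting a normal form for words in $M(C(m,l,n))$ indexed precisely by the pair (length, alternating sum). Concretely, using the relations $xy=yz$ and $yx=zy$ one slides a distinguished generator (say the arc coloured $0$) past its neighbours, at each step lowering a suitable complexity measure, until every word is rewritten onto a minimal canonical support; the resulting canonical words are then in bijection with the attainable (length, alt-sum) pairs. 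Since both semigroups are graded by word length with finite graded pieces, this bijection forces $|M(C(m,l,n))_d|\le|\AS(\mathbb{Z}_N,B)_d|$ in each degree, which together with surjectivity yields injectivity.

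The main obstacle is exactly this reduction to normal form in the presence of three interacting twist regions: rewriting inside one region alters the boundary colours feeding the adjacent region, and one must show these interactions never obstruct the reduction. I expect the cleanest route is induction on $m$. The base case $m=1$ is covered by Theorem~\ref{main}, because the continued-fraction identity $[n,l,1]=[n,l+1]$ gives $C(1,l,n)\cong\mathfrak{dtw}_n^{\,l+1}$, and one checks directly that the two generating sets agree in $\mathbb{Z}_{(l+1)n+1}$ (the extra top generator $(l+1)n+1$ being $\equiv 0$). The inductive step relates $M(C(m,l,n))$ to $M(C(m-1,l,n))$ by the removal of one crossing from the first region; here the difficulty is that the modulus drops by $ln+1$ from $(ml+1)n+m$ to $\bigl((m-1)l+1\bigr)n+(m-1)$, so the step is not a mere inclusion of generators but a genuine change of ambient group, and controlling how the normal form transports across this change of modulus is where the argument must do its real work.
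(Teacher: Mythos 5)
First, a point of comparison that matters: the paper does \emph{not} prove this statement. It appears only as an unproven conjecture generalizing Theorem~\ref{main}, so there is no proof of record to measure you against; your proposal must stand on its own, and judged that way it has a genuine gap. The scaffolding is sound: the observation that $xy=yz$ holds in $\AS(\mathbb{Z}_N,B)$ exactly when $z=2y-x$ (the colouring rule) correctly yields, via Lemma~\ref{mainlemma}, a homomorphism $\psi$ once the arc labels are checked region by region, and your base-case verification that $C(1,l,n)\cong\mathfrak{dtw}_n^{l+1}$ with matching generating sets (the extra generator $(l+1)n+1\equiv 0$ in $\mathbb{Z}_{(l+1)n+1}$) is correct and consistent with Theorem~\ref{main}. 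But in the double-twist proof the entire content lies in the canonical-word machinery: the transfer lemmas (Lemmas~\ref{trans2} and~\ref{trans1}) that move a non-zero entry leftward past a letter of the \emph{other} generator family, the antilexicographic defect order, and the long case analysis showing that every word reduces to a canonical word and that each $\sim$-class contains exactly one. For $C(m,l,n)$ you never formulate, let alone prove, the analogous lemmas. With three generator families $\{i\}$, $\{jn+1\}$, $\{(kl+1)n+k\}$, the two- and three-letter reductions must handle all mixed patterns involving the new $m$-family against each of the other two, and the sentence ``one slides a distinguished generator past its neighbours, at each step lowering a suitable complexity measure'' is precisely the theorem to be proved, not an argument. (Your final counting step --- canonical words biject with attainable (length, alt-sum) pairs, hence $\psi$ is injective --- is fine, but it is entirely downstream of the missing reduction.)

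The fallback induction on $m$ does not repair this, because the inductive step has no mechanism behind it. Untwisting one crossing in the $m$-region is a crossing change, not a Reidemeister move: $C(m,l,n)$ and $C(m-1,l,n)$ are different knots with globally different presentations, and a knot semigroup is not functorial under such a modification --- there is no induced homomorphism $M(C(m,l,n))\to M(C(m-1,l,n))$ in either direction. On the target side the situation is just as bad: since $\bigl((m-1)l+1\bigr)n+(m-1)$ does not in general divide $(ml+1)n+m$, there is no natural map $\mathbb{Z}_{(ml+1)n+m}\to\mathbb{Z}_{((m-1)l+1)n+(m-1)}$ compatible with the generating sets, so ``transporting the normal form across the change of modulus'' names the obstacle without supplying any tool for it --- as you yourself concede (``where the argument must do its real work''). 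What a complete proof along the paper's lines would require is the direct route: list all defining relations from the three twist regions, prove analogues of Lemmas~\ref{lemma1}--\ref{trans1} including the new family (note the canonical words will likely need up to \emph{three} non-zero leading entries, one per family, rather than the two of the form $dc0^{t-2}$), define the defect accordingly, and redo both halves of the canonical-word count in $\mathbb{Z}_{(ml+1)n+m}$. None of that is present, so the proposal is a plausible programme rather than a proof.
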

\end{remark}
\subsection{Proof of the theorem \ref{main}}
Suppose that $A^+/\kappa$ is a knot semigroup, where $A$ is the set of arcs and $\kappa$ is a cancellative congruence on the free semigroup $A^+$ induced by the defining relations of the knot semigroup. Let $\sim$ be a congruence on $B^+$, where $B$ is an alphabet of the same size as $A$. We shall establish an isomorphism between $A^+/\kappa$ and $B^+/\sim$ by the following Lemma. 
\begin{lemma}[\cite{V1} Lemma 2.] \label{mainlemma}
Suppose A and B are sets. Consider a bijection $\phi : A\rightarrow B$. It induces an isomorphism between $A^+$ and $B^+$, which we shall denote $\phi^+$. Suppose a congruence $\kappa$ on $A^+$ and a congruence on $\sim$ on $B^+$ are such that for each $u, v\in A^+$ if $u \, \kappa \, v$ then $\phi(u)\sim\phi(v)$. Then $\phi$ induces a mapping from $A^+$ to $B^+$, which we shall denote by $\psi$. Moreover, $\psi$ is a homomorphism. Suppose a subset of $B^+$ exists, which we shall call the set of canonical words, such that in each class of $\sim$ there is exactly one canonical word and at least one word of each class of $\kappa$ is mapped by $\phi^+$ to a canonical word. Then $\psi$ is an isomorphism between $A^+/\kappa$ and $B^+/\sim$. 
\end{lemma}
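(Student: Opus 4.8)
The plan is to build $\psi$ as the map induced on quotients by the free-semigroup isomorphism $\phi^+$, and then to verify the four standard properties (well-defined, homomorphism, surjective, injective) in turn, saving injectivity---where the canonical-words hypothesis actually enters---for last. First I would define $\psi : A^+/\kappa \to B^+/\sim$ on $\kappa$-classes by $\psi([u]_\kappa) = [\phi^+(u)]_\sim$. Well-definedness is precisely the content of the standing hypothesis: if $u \,\kappa\, v$ then $\phi^+(u) \sim \phi^+(v)$, so $[\phi^+(u)]_\sim = [\phi^+(v)]_\sim$ and the value of $\psi$ does not depend on the chosen representative. That $\psi$ is a homomorphism then follows because $\phi^+$ is one and the two quotient maps are: for any $u, v \in A^+$,
\[
\psi([u]_\kappa[v]_\kappa) = [\phi^+(uv)]_\sim = [\phi^+(u)\phi^+(v)]_\sim = [\phi^+(u)]_\sim[\phi^+(v)]_\sim = \psi([u]_\kappa)\psi([v]_\kappa).
\]

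Surjectivity needs nothing beyond $\phi^+$ being a bijection: given any class $[w]_\sim \in B^+/\sim$, write $w = \phi^+(u)$ for the unique $u \in A^+$ with that image, and then $\psi([u]_\kappa) = [w]_\sim$. The real work is injectivity, and this is where the canonical words are used. Suppose $\psi([u]_\kappa) = \psi([v]_\kappa)$, i.e.\ $\phi^+(u) \sim \phi^+(v)$. Using the hypothesis that each $\kappa$-class contains a word mapped by $\phi^+$ to a canonical word, I would choose $u', v' \in A^+$ with $u \,\kappa\, u'$, $v \,\kappa\, v'$, and $\phi^+(u'), \phi^+(v')$ both canonical. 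Applying the standing hypothesis to $u \,\kappa\, u'$ and to $v \,\kappa\, v'$ gives $\phi^+(u) \sim \phi^+(u')$ and $\phi^+(v) \sim \phi^+(v')$, so by transitivity $\phi^+(u') \sim \phi^+(v')$. Now both are canonical words lying in one and the same $\sim$-class; since such a class contains exactly one canonical word, $\phi^+(u') = \phi^+(v')$. Injectivity of $\phi^+$ then forces $u' = v'$, whence $u \,\kappa\, u' = v' \,\kappa\, v$ and therefore $[u]_\kappa = [v]_\kappa$.

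Combining the last three steps, $\psi$ is a bijective semigroup homomorphism, hence an isomorphism $A^+/\kappa \simeq B^+/\sim$. The only step that is more than bookkeeping is injectivity, and there the single genuine idea is to replace the arbitrary representatives $u, v$ by canonical representatives $u', v'$: the ``exactly one canonical word per class'' clause collapses the relation $\phi^+(u') \sim \phi^+(v')$ into the literal equality $\phi^+(u') = \phi^+(v')$, after which injectivity of the free-semigroup isomorphism finishes the argument. I expect no obstacle beyond keeping the two congruences $\kappa$ and $\sim$, and the two levels (words versus classes), carefully distinguished throughout.
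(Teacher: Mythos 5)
Your proof is correct. Note that the paper itself gives no proof of this lemma---it is quoted verbatim from Vernitski's paper [V1]---and your argument (define $\psi$ on $\kappa$-classes via $\phi^+$, check well-definedness and the homomorphism property from the standing hypothesis, get surjectivity from bijectivity of $\phi^+$, and get injectivity by passing to canonical representatives and using uniqueness of the canonical word in each $\sim$-class) is exactly the standard argument, the same one given in the source [V1].
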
 
Let
\begin{equation*} 
A=\{a_0,\dots,a_n,a_{n+1},a_{2n+1},\dots,a_{(l-2)n+1},a_{(l-1)n+1}\}
\end{equation*}
be the set of arcs as in the following figure.
\begin{center}
\includegraphics[keepaspectratio,width=4.5cm,bb=60 0 310 350]{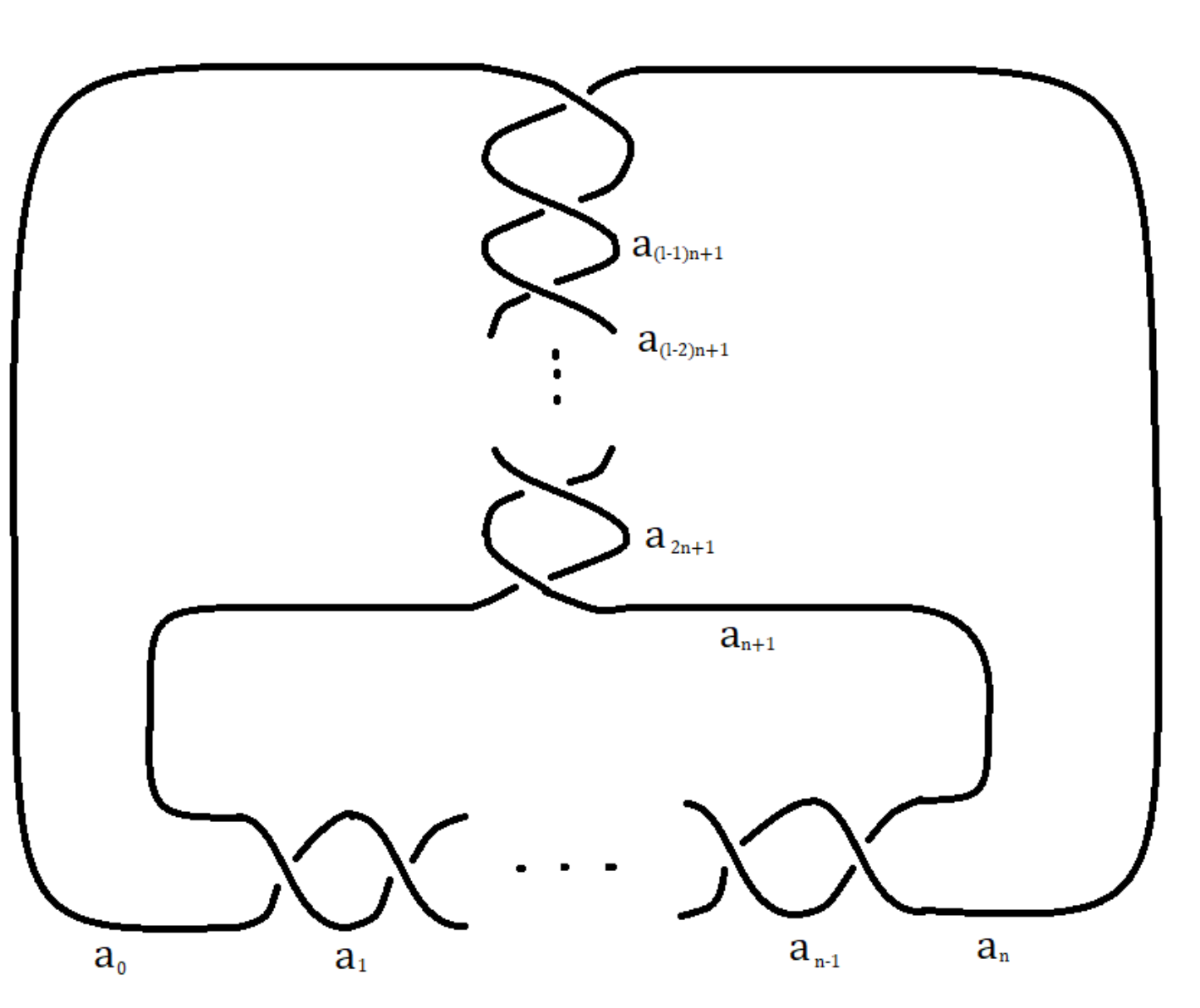}
\end{center}
Denote the set $\{0,1,\dots,n,0\cdot n+1,1\cdot n+1,\dots (l-1)\cdot n+1\}$ by $C_{n,l}$. Consider a mapping $\phi$ from $A$ to $C_{n,l}$ defined as $a_i\mapsto i$. It induces an isomorphism $A^+$ to $C_{n,l}^+$, which we shall denote by $\phi^+$. Then we have the following Lemma.
\begin{lemma} \label{lemma1}
The equality $a_ia_{i+j}=a_{i+k}a_{i+j+k}$ is true in $M(\mathfrak{dtw}_n^l)$ for all values of $i,j,k$ such that $0\le i\le i+j\le i+j+k \le n+1$. 
\end{lemma}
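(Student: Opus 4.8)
The plan is to bootstrap the whole family of identities from the elementary relations carried by the $n$ crossings of the first twist region, which read $a_i a_{i+1} = a_{i+1} a_{i+2}$ together with their reverses $a_{i+1} a_i = a_{i+2} a_{i+1}$ for $0 \le i \le n-1$; these are exactly the instances $j = k = 1$ of the asserted identity. First I would note that it suffices to treat $k = 1$. Indeed, once $a_i a_{i+j} = a_{i+1} a_{i+j+1}$ is known for every admissible $(i,j)$, the general identity follows by telescoping $a_i a_{i+j} = a_{i+1} a_{i+j+1} = \cdots = a_{i+k} a_{i+j+k}$, each intermediate step being legitimate because the hypothesis $i+j+k \le n+1$ keeps all indices in $[0,n+1]$. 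Thus the problem reduces to the unit shift $a_i a_{i+j} = a_{i+1} a_{i+j+1}$.

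I would then prove the unit shift by induction on $j$, using cancellativity. For the inductive step, left-multiply by $a_{i+1}$: applying the reverse relation $a_{i+1} a_i = a_{i+2} a_{i+1}$ and then the induction hypothesis $a_{i+1} a_{i+j} = a_{i+2} a_{i+j+1}$ yields $a_{i+1}(a_i a_{i+j}) = a_{i+2}^2 a_{i+j+1}$, while on the other side $a_{i+1}(a_{i+1} a_{i+j+1}) = a_{i+1}^2 a_{i+j+1}$. Comparing the two sides and cancelling $a_{i+1}$ on the left reduces the step to the degenerate relation $a_{i+1}^2 = a_{i+2}^2$. Unwinding the recursion, the entire Lemma collapses onto the single family of square relations $a_m^2 = a_{m+1}^2$ for $0 \le m \le n$ (this is the base case $j=0$), every other instance being a formal consequence of these together with the first-region relations and cancellation.

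The main obstacle is precisely these squares, for they cannot be obtained from the first twist region in isolation. Assigning to each arc the parity of its index defines a homomorphism from the free semigroup on the arcs to $\mathbb{N}^2$ (even index to $(1,0)$, odd index to $(0,1)$) that respects every first-region relation and its reverse, yet sends $a_m^2$ and $a_{m+1}^2$ to $(2,0)$ and $(0,2)$. Since $\mathbb{N}^2$ is cancellative, were the semigroup presented by the first-region relations alone this homomorphism would factor through it and force $a_m^2 \neq a_{m+1}^2$ there. Hence the squares must be driven by the remaining crossing relations, namely those of the second twist region and of the arcs closing the two regions into $\mathfrak{dtw}_n^l$, whose indices, read in $\mathbb{Z}_{ln+1}$ with $ln+1$ odd (here the assumption that $nl$ is even enters), flip integer parity at the wrap-around and so break the obstruction above. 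Concretely I would derive $a_m^2 = a_{m+1}^2$ by producing, from a word that travels once through this wrap-around, an equality of the form $w\, a_m^2 = w\, a_{m+1}^2$ and cancelling $w$. Once the squares are in hand the reduction of the first two paragraphs finishes the proof mechanically; pinning down and exploiting the exact parity-breaking relation is where the real difficulty lies.
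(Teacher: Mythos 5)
Your reduction scheme is correct, and parts of it are cleaner than the paper's own organization: telescoping to the unit shift $k=1$, the induction on the gap $j$ via left-multiplication by $a_{i+1}$ and cancellation, and the resulting collapse of the lemma onto the square relations $a_m^2=a_{m+1}^2$ ($0\le m\le n$) are all valid. Your $\mathbb{N}^2$-parity homomorphism is also a genuine proof that the squares cannot follow from the first twist region alone: since $\mathbb{N}^2$ is cancellative, the kernel of that homomorphism is a cancellative congruence containing the first-region relations, hence contains the cancellative congruence they generate, and it separates $a_m^2$ from $a_{m+1}^2$. The paper's proof has the same skeleton but assembled differently: it first derives all even shifts $a_ia_{i+j}=a_{i+2}a_{i+j+2}$ by a cancellation trick using only the first region, then proves the \emph{single} square $a_0^2=a_1^2$, then the unit shift $a_0a_j=a_1a_{j+1}$, and builds odd shifts from these. (Note that this ordering also spares it your burden of needing every square as a base case: one square plus even shifts yields all of them.)

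However, your proof stops exactly where the lemma's real content begins: you never derive any square $a_m^2=a_{m+1}^2$; you only announce that you ``would'' obtain one from a word traveling through the wrap-around, and you concede that pinning down the relation is ``where the real difficulty lies.'' That derivation occupies the bulk of the paper's proof and is not routine: it requires a case split on the parity of $l$. For $l$ even, the paper computes in the top region $a_{n+1}a_{2n+1}a_{2n+1}=a_{2n+1}a_{3n+1}a_{2n+1}=a_{2n+1}a_{2n+1}a_{n+1}=a_{2n+1}a_{n+1}a_1=a_{n+1}a_1a_1$ and cancels $a_{n+1}$ to get $a_{2n+1}^2=a_1^2$; it then shows consecutive top squares agree, and evenness of $l$ is what lets the chain $a_{2n+1}^2=a_{4n+1}^2=\cdots$ terminate at $a_{ln+1}^2=a_0^2$. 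For $l$ odd, the hypothesis that $nl$ is even forces $n$ even, and a different chain $a_{(l-1)n+1}a_0a_0=a_0a_na_0=a_0a_0a_{(l-1)n+1}=a_0a_{(l-1)n+1}a_{(l-2)n+1}=a_{(l-1)n+1}a_{(l-2)n+1}a_{(l-2)n+1}$ (using $a_{ln+1}=a_0$ and index arithmetic mod $ln+1$) yields $a_0^2=a_{(l-2)n+1}^2$, which descends to $a_1^2$ only because $n$ is even. None of this is recoverable from your heuristic that $ln+1$ being odd ``flips parity at the wrap-around'': the obstruction argument tells you the second region is necessary, not how to use it, and the two genuinely different mechanisms in the two parity cases are precisely what your plan leaves unspecified. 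As written, the proposal is an accurate reduction plus a correct diagnosis of the difficulty, but not a proof.
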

\begin{proof}
The relations in $M(\mathfrak{dtw}_n^l)$ are the equalities 
\begin{equation*}
a_{i-1}a_i=a_ia_{i+1},
\end{equation*}
and 
\begin{equation*}
a_ia_{i-1}=a_{i+1}a_i
\end{equation*}
for all $i=1,2,\dots,n$ (from the crossings at the bottom of the diagram), and the equalities 
\begin{equation*}
a_{(l-j-1)n+1}a_{(l-j)n+1}=a_{(l-j)n+1}a_{(l-j+1)n+1},
\end{equation*}
and 
\begin{equation*}
a_{(l-j)n+1}a_{(l-j-1)n+1}=a_{(l-j+1)n+1}a_{(l-j)n+1}
\end{equation*}
for all $j=0,1,\dots,l-1$ (from the crossings at the top of the diagram), where $a_{ln+1}=a_0$. Applying relations of the type $a_{i-1}a_i=a_ia_{i+1}$ repeatedly, we obtain $a_ia_{i+1}=a_{i+k}a_{i+1+k}$ for all values of $i,k$ such that $0\le i\le i+1\le i+1+k \le n+1$. Similarly, we can obtain $a_ia_{i-1}=a_{i+k}a_{i-1+k}$ for all values of $i,k$ such that $0\le i-1 \le i\le i+k \le n+1$. Consider 
\begin{equation*}
a_ia_{i+j}a_{i+j+1}=a_ia_{i+1}a_{i+2}=a_{i+2}a_{i+3}a_{i+2}=a_{i+2}a_{i+j+2}a_{i+j+1}.
\end{equation*}
Hence $a_ia_{i+j}=a_{i+2}a_{i+j+2}$ by the cancellative rule. This proves that $a_ia_{i+j}=a_{i+k}a_{i+j+k}$ for all values of  $i,j,k$ such that $0\le i\le i+j \le i+j+k \le n+1$ and even $k$. 

We shall prove that $a_0a_0=a_1a_1$.
\begin{enumerate}
\item[(1)]Suppose the integer $l$ is an even number. \\
Consider
\begin{equation*}
\begin{split}
a_{n+1}a_{\{l-(l-2)\}n+1}a_{\{l-(l-2)\}n+1}&=a_{\{l-(l-2)\}n+1}a_{\{l-(l-3)\}n+1}a_{\{l-(l-2)\}n+1} \\
&=a_{\{l-(l-2)\}n+1}a_{\{l-(l-2)\}n+1}a_{n+1} \\
&=a_{\{l-(l-2)\}n+1}a_{n+1}a_1 \\
&=a_{n+1}a_1a_1.
\end{split}
\end{equation*}
Hence we have $a_{\{l-(l-2)\}n+1}a_{\{l-(l-2)\}n+1}=a_1a_1$. \\
Next consider
\begin{equation*}
\begin{split}
a_{\{l-(l-3)\}n+1}a_{\{l-(l-4)\}n+1}a_{\{l-(l-4)\}n+1}&=a_{\{l-(l-4)\}n+1}a_{\{l-(l-5)\}n+1}a_{\{l-(l-4)\}n+1} \\
&=a_{\{l-(l-4)\}n+1}a_{\{l-(l-4)\}n+1}a_{\{l-(l-3)\}n+1} \\
&=a_{\{l-(l-4)\}n+1}a_{\{l-(l-3)\}n+1}a_{\{l-(l-2)\}n+1} \\
&=a_{\{l-(l-3)\}n+1}a_{\{l-(l-2)\}n+1}a_{\{l-(l-2)\}n+1}.
\end{split}
\end{equation*}
Hence $a_{\{l-(l-4)\}n+1}a_{\{l-(l-4)\}n+1}=a_{\{l-(l-2)\}n+1}a_{\{l-(l-2)\}n+1}$. \\
Since $l$ is even, $a_0a_0=a_1a_1$.
\item[(2)]Suppose the integer $l$ is an odd number. \\
Since $nl$ is even, $n$ is an even number. Consider 
\begin{equation*}
\begin{split}
a_{(l-1)n+1}a_0a_0&=a_0a_na_0 \\
&=a_0a_0a_{(l-1)n+1} \\
&=a_0a_{(l-1)n+1}a_{(l-2)n+1} \\
&=a_{(l-1)n+1}a_{(l-2)n+1}a_{(l-2)n+1}.
\end{split}
\end{equation*}
Hence $a_0a_0=a_{(l-2)n+1}a_{(l-2)n+1}$. Since this equation holds and $n$ is an even number,
\begin{equation*}
a_0a_0=a_{(l-2)n+1}a_{(l-2)n+1}=\dots=a_{n+1}a_{n+1}=a_1a_1.
\end{equation*} 
\end{enumerate} 
We shall prove that $a_0a_j=a_1a_{j+1}$ for all values of $j=0,1,\dots,n$. Let $j$ be an odd number. Consider
\begin{equation*}
a_0a_0a_j=a_{j-1}a_{j-1}a_j=a_{j-1}a_ja_{j+1}=a_0a_1a_{j+1}.
\end{equation*}
Hence $a_0a_j=a_1a_{j+1}$. \\ 
Let $j$ be even and positive. Consider 
\begin{equation*}
\begin{split}
a_0a_0a_j&=a_1a_1a_j  \,\,\,\,({\rm{Since}}\, a_0a_0=a_1a_1.)\\
&=a_{j-1}a_{j-1}a_j \\
&=a_{j-1}a_ja_{j+1} \\
&=a_{j-2}a_{j-1}a_{j+1} \\
&=a_0a_1a_{j+1}.
\end{split}
\end{equation*}
Hence $a_0a_j=a_1a_{j+1}$.\\
Now suppose $k$ is odd. If $i$ is even we have 
\begin{equation*}
\begin{split}
a_ia_{i+j}&=a_0a_j \\
&=a_1a_{j+1} \\
&=a_{i+1}a_{i+j+1} \\
&=a_{(i+1)+(k-1)}a_{(i+j+1)+(k-1)} \\
&=a_{i+k}a_{i+j+k}.
\end{split}
\end{equation*}
If $i$ is odd, we have
\begin{equation*}
\begin{split}
a_ia_{i+j}&=a_1a_{j+1} \\
&=a_0a_j \\
&=a_{i+1}a_{i+j+1} \\
&=a_{(i+1)+(k-1)}a_{(i+j+1)+(k-1)} \\
&=a_{i+k}a_{i+j+k}.
\end{split}
\end{equation*}
\end{proof}
\begin{lemma}
The equality $a_{in+1}a_{(i+j)n+1}=a_{(i+k)n+1}a_{(i+j+k)n+1}$ is true in $M(\mathfrak{dtw}_n^l)$ for all values of $i,j,k$ such that $0\le i\le i+j\le i+j+k\le l+1$.
\end{lemma}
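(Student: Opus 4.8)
The plan is to observe that this statement is the exact structural mirror of Lemma \ref{lemma1}: the ``bottom'' twist region there (with $n$ crossings, arcs $a_0,\dots,a_{n+1}$ of step $1$) is replaced by the ``top'' twist region here (with $l$ crossings, arcs $a_{in+1}$ of step $n$). To make the analogy transparent I would write $b_i := a_{in+1}$ for $0\le i\le l+1$, recording the seam identification $b_l=a_{ln+1}=a_0$ and treating $b_{l+1}=a_{(l+1)n+1}$ as the symbol appearing in the $j=0$ top relation. In this notation the top relations read $b_{m-1}b_m=b_mb_{m+1}$ and $b_mb_{m-1}=b_{m+1}b_m$ for $m=1,\dots,l$, which are formally identical to the bottom relations used in Lemma \ref{lemma1}, and the target identity is $b_ib_{i+j}=b_{i+k}b_{i+j+k}$ for $0\le i\le i+j\le i+j+k\le l+1$.

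First I would reproduce the opening moves of Lemma \ref{lemma1} verbatim in the $b$-notation: applying $b_{m-1}b_m=b_mb_{m+1}$ repeatedly gives $b_ib_{i+1}=b_{i+k}b_{i+1+k}$, and the mirror relation gives $b_ib_{i-1}=b_{i+k}b_{i-1+k}$; then the three-term cancellation
\[
b_i b_{i+j} b_{i+j+1} = b_i b_{i+1} b_{i+2} = b_{i+2} b_{i+3} b_{i+2} = b_{i+2} b_{i+j+2} b_{i+j+1}
\]
yields $b_ib_{i+j}=b_{i+2}b_{i+j+2}$, establishing the claim for all even $k$.

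The part that genuinely differs, and where the two regions interact, is the base step for odd $k$, namely $b_0b_0=b_1b_1$, i.e.\ $a_1a_1=a_{n+1}a_{n+1}$. Here I expect the argument to be \emph{shorter} than in Lemma \ref{lemma1}, because it is inherited for free from the already-proven bottom-region identity: taking $i=j=0$ in Lemma \ref{lemma1} gives $a_0a_0=a_ka_k$ for every $0\le k\le n+1$, and comparing $k=1$ with $k=n+1$ yields $a_1a_1=a_{n+1}a_{n+1}$, which is precisely $b_0b_0=b_1b_1$. Thus the delicate parity case-split on $l$ that was needed to prove $a_0a_0=a_1a_1$ in Lemma \ref{lemma1} is replaced here by a one-line appeal to Lemma \ref{lemma1} itself; the roles of the two twist regions are reversed, and this time the base case is the easy one.

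With $b_0b_0=b_1b_1$ in hand I would mirror the remainder of Lemma \ref{lemma1}: prove $b_0b_j=b_1b_{j+1}$ for $0\le j\le l$ by the same odd/even-$j$ computations (combining the even-$k$ result with a single top relation $b_{j-1}b_j=b_jb_{j+1}$ and one left cancellation), and then combine the even-$k$ case with $b_0b_j=b_1b_{j+1}$ and a parity split on $i$ to reach the odd-$k$ case, completing the proof. The main obstacle I anticipate is purely bookkeeping: keeping the seam identifications at $b_l=a_0$ and $b_{l+1}$ consistent where the top region closes up, and checking that every index stays within the admissible range $0\le\,\cdot\,\le l+1$ so that each invoked equality is genuinely one of the top crossing relations or an instance already established. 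Once the substitution $b_i=a_{in+1}$ is fixed, no idea beyond Lemma \ref{lemma1} is required.
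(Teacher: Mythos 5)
Your proposal is correct, and its overall skeleton is exactly the paper's: establish the even-$k$ case by iterating the top-crossing relations and the three-term cancellation trick, then prove the base identity $a_1a_1=a_{n+1}a_{n+1}$ (your $b_0b_0=b_1b_1$), then get $b_0b_j=b_1b_{j+1}$ by the odd/even-$j$ computations, and finally the odd-$k$ case by a parity split on $i$. The one place you genuinely diverge is the base identity, and your route there is both valid and cleaner. The paper proves $a_1a_1=a_{n+1}a_{n+1}$ by a three-way parity analysis ($l$ odd, hence $n$ even; $l$ even and $n$ even; $l$ even and $n$ odd), carrying out explicit bottom-region computations in the last two cases and, in the third, re-invoking the sub-result $a_0a_0=a_1a_1$ from inside the proof of Lemma \ref{lemma1}. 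You instead cite the \emph{statement} of Lemma \ref{lemma1} with $i=j=0$, which gives $a_0a_0=a_ka_k$ for all $0\le k\le n+1$ and hence $a_1a_1=a_{n+1}a_{n+1}$ in one line; since Lemma \ref{lemma1} is proved before this lemma (under the standing hypothesis that $nl$ is even, which is precisely where the parity analysis already happened), there is no circularity, and your appeal subsumes all three of the paper's cases at once. What the paper's version buys is self-containedness of the computation within the diagram's relations; what yours buys is brevity and a clear display of the logical dependence on Lemma \ref{lemma1}. Your flagged bookkeeping concern is real but harmless: the paper's own convention is $a_{ln+1}=a_0$ and (implicitly, as used later in Lemma \ref{trans2}) $a_{(l+1)n+1}=a_n$, so your $b_l$, $b_{l+1}$ are honest arc letters and every relation you invoke is a genuine top-crossing relation.
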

\begin{proof}
Applying relations of the type 
\begin{equation*}
a_{(l-j-1)n+1}a_{(l-j)n+1}=a_{(l-j)n+1}a_{(l-j+1)n+1}
\end{equation*}
repeatedly, we obtain 
\begin{equation*}
a_{in+1}a_{(i+1)n+1}=a_{(i+k)n+1}a_{(i+1+k)n+1} 
\end{equation*}
for all values of $i,k$ such that $0\le i\le i+1 \le i+1+k \le l+1$. Similarly, we can obtain
\begin{equation*}
a_{in+1}a_{(i-1)n+1}=a_{(i+k)n+1}a_{(i-1+k)n+1}
\end{equation*}
for all values of $i,k$ such that $0\le i-1 \le i\le i+k \le l+1$. Consider
\begin{equation*}
\begin{split}
a_{in+1}a_{(i+j)n+1}a_{(i+j+1)n+1}&=a_{in+1}a_{(i+1)n+1}a_{(i+2)n+1} \\
&=a_{(i+2)n+1}a_{(i+3)n+1}a_{(i+2)n+1} \\
&=a_{(i+2)n+1}a_{(i+j+2)n+1}a_{(i+j+1)n+1}.
\end{split}
\end{equation*}
Hence $a_{in+1}a_{(i+j)n+1}=a_{(i+2)n+1}a_{(i+j+2)n+1}$. This proves that 
\begin{equation*}
a_{in+1}a_{(i+j)n+1}=a_{(i+k)n+1}a_{(i+j+k)n+1}
\end{equation*}
for all values of $i,j,k$ such that $0\le i\le i+j\le i+j+k\le l+1$ and even $k$. \\
We shall prove that $a_1a_1=a_{n+1}a_{n+1}$.
\begin{enumerate}
\item[(1)]Suppose $l$ is odd.\\
Since $n$ is even, $a_1a_1=a_{n+1}a_{n+1}$.
\item[(2)]Suppose $l$ is even, and $n$ is even. \\
Consider
\begin{equation*}
a_2a_1a_1=a_1a_0a_1=a_1a_1a_2=a_1a_2a_3=a_2a_3a_3.
\end{equation*}
Hence $a_1a_1=a_3a_3$. This proves that $a_1a_1=a_{n+1}a_{n+1}$. 
\item[(3)]Suppose $l$ is even and $n$ is odd. \\
Consider
\begin{equation*}
\begin{split}
a_na_{n+1}a_{n+1}&=a_{n-1}a_na_{n+1} \\
&=a_{n-1}a_{n-1}a_n \\
&=a_{n-1}a_{n-2}a_{n-1} \\
&=a_na_{n-1}a_{n-1}. \\
\end{split}
\end{equation*}
Hence $a_{n+1}a_{n+1}=a_{n-1}a_{n-1}$.Since $n$ is odd, $a_{n+1}a_{n+1}=a_0a_0$. Since $l$ is even, $a_0a_0=a_1a_1$. Thus $a_{n+1}a_{n+1}=a_1a_1$.
\end{enumerate}
We shall prove that $a_1a_{jn+1}=a_{n+1}a_{(j+1)n+1}$ for  all values $j=0,1,\dots,n$. Let $j$ be an odd number. Consider
\begin{equation*}
\begin{split}
a_1a_1a_{jn+1}&=a_{(j-1)n+1}a_{(j-1)n+1}a_{jn+1} \\
&=a_{(j-1)n+1}a_{jn+1}a_{(j+1)n+1} \\
&=a_1a_{n+1}a_{(j+1)n+1}.
\end{split}
\end{equation*}
Hence $a_1a_{jn+1}=a_{n+1}a_{(j+1)n+1}$. \\
Let $j$ be even and positive. Consider
\begin{equation*}
\begin{split}
a_1a_1a_{jn+1}&=a_{n+1}a_{n+1}a_{jn+1} \\
&=a_{(j-1)n+1}a_{(j-1)n+1}a_{jn+1} \\
&=a_{(j-1)n+1}a_{jn+1}a_{(j+1)n+1} \\
&=a_{(j-2)n+1}a_{(j-1)n+1}a_{(j+1)n+1} \\
&=a_1a_{n+1}a_{(j+1)n+1}.
\end{split}
\end{equation*}
Hence $a_1a_{jn+1}=a_{n+1}a_{(j+1)n+1}$. \\
Suppose $k$ is odd. If $i$ is even we have
\begin{equation*}
\begin{split}
a_{in+1}a_{(i+j)n}&=a_1a_{jn+1} \\
&=a_{n+1}a_{(j+1)n+1} \\
&=a_{(i+1)n+1}a_{(i+j+1)n+1} \\
&=a_{\{(i+1)+(k-1)\}n+1}a_{\{(i+j+1)+(k-1)\}n+1} \\
&=a_{(i+k)n+1}a_{(i+j+k)n+1}.
\end{split}
\end{equation*}
If $i$ is odd, we have
\begin{equation*}
\begin{split}
a_{in+1}a_{(i+j)n}&=a_{n+1}a_{(j+1)n+1} \\
&=a_1a_{jn+1} \\
&=a_{(i+1)n+1}a_{(i+j+1)n+1} \\
&=a_{\{(i+1)+(k-1)\}n+1}a_{\{(i+j+1)+(k-1)\}n+1} \\
&=a_{(i+k)n+1}a_{(i+j+k)n+1}.
\end{split}
\end{equation*}
\end{proof}
\begin{lemma} \label{trans2}
The equality $a_{pn+1}a_qa_{rn+1}=a_{(p+1)n+1}a_qa_{(r-1)n+1}$ is true in $M(\mathfrak{dtw}_n^l)$ for all values $p,q,r$ such that $0\le p\le n-1$, $1\le r\le l$, and $q\not\in \{0,2n+1,\dots,(l-1)n+1\}$. 
\end{lemma}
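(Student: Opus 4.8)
The plan is to derive this transfer identity purely from the crossing relations, using Lemma \ref{lemma1} and the preceding lemma (the shift identity for the top generators $a_{in+1}$) as the engine. As a preliminary sanity check, note that both words have length $3$ and the same alternating sum, since $(pn+1)-q+(rn+1)=((p+1)n+1)-q+((r-1)n+1)$ already in $\mathbb{Z}$ (the extra $+n$ on the first letter cancels the $-n$ on the third). Thus the identity is forced in the target semigroup $\AS(\mathbb{Z}_{ln+1},C_{n,l})$, and the whole content is to produce it from the presentation. Writing $E_{p,r}$ for the asserted equality, I would prove all the $E_{p,r}$ at once by establishing one corner/boundary case and then propagating along $r$ (and, symmetrically, along $p$).

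For the propagation I would assume $E_{p,r+1}$ and right-multiply it by $a_{rn+1}$. On the left the new trailing pair $a_{(r+1)n+1}a_{rn+1}$ is a pair of top generators, which the mirror top crossing relation rewrites as $a_{rn+1}a_{(r-1)n+1}$; on the right the trailing pair $a_{rn+1}a_{rn+1}$ is replaced by $a_{(r-1)n+1}a_{(r-1)n+1}$ using the gap-zero shift ($j=0$, $k=-1$) supplied by the preceding lemma. Both sides then terminate in $a_{(r-1)n+1}$, and cancelling it (cancellativity) leaves exactly $E_{p,r}$. Hence $E_{p,r+1}\Rightarrow E_{p,r}$, so it suffices to treat the largest admissible value of $r$, and a symmetric left-multiplication argument lets me bring $p$ to its boundary value as well.

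The base case is where the wraparound $a_{ln+1}=a_0$ (and $a_{0\cdot n+1}=a_1$, $a_{1\cdot n+1}=a_{n+1}$) is used. When the outer indices are driven to the boundary, the trailing and leading top generators become bottom-region arcs, so the relevant pairs land inside the window $\{0,1,\dots,n+1\}$ and Lemma \ref{lemma1}, together with the base equalities $a_0a_0=a_1a_1$ and $a_1a_1=a_{n+1}a_{n+1}$, applies directly. As in the proofs of Lemma \ref{lemma1} and the preceding lemma, this computation splits according to the parity of $q$: the odd case is handled by a short chain of crossing relations, and the even case by first substituting $a_0a_0=a_1a_1$.

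The main obstacle is exactly this base case, namely passing the twist charge across the single middle generator $a_q$ that joins the two twist regions while keeping every intermediate index inside the ranges $[0,n+1]$ and $[0,l+1]$ where the two shift lemmas are valid. This is where the hypothesis $q\notin\{0,2n+1,\dots,(l-1)n+1\}$ is essential: it forces $a_q$ to be a bottom-region arc with $1\le q\le n+1$, so that the auxiliary bottom pairs created during the rewriting never leave $[0,n+1]$ and Lemma \ref{lemma1} stays applicable. If $a_q$ were one of the excluded generators, the rewriting would escape these windows and the cancellation step would fail, so the exclusion is precisely what makes the derivation go through.
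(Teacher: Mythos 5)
Your propagation step is genuinely correct: right-multiplying $E_{p,r+1}$ by $a_{rn+1}$, rewriting the trailing pairs via $a_{(r+1)n+1}a_{rn+1}=a_{rn+1}a_{(r-1)n+1}$ and $a_{rn+1}a_{rn+1}=a_{(r-1)n+1}a_{(r-1)n+1}$ (the top-generator shift lemma preceding Lemma \ref{trans2}), and cancelling the common last letter does yield $E_{p,r+1}\Rightarrow E_{p,r}$, and the symmetric left-multiplication handles $p$. The gap is the base case, which is where the entire content of the lemma sits, and your account of it is inconsistent. Your implications \emph{lower} $r$ and $p$, so your base must sit at the \emph{largest} admissible indices; but there the word is $a_{pn+1}a_qa_0=a_{(p+1)n+1}a_qa_{(l-1)n+1}$, which still contains the genuine top-region generator $a_{(l-1)n+1}$ (and $a_{pn+1},a_{(p+1)n+1}$ unless $p=0$). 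For $l\ge 3$ these are not bottom-region arcs, so your claim that at the boundary ``the relevant pairs land inside the window $\{0,1,\dots,n+1\}$'' fails, and Lemma \ref{lemma1} does not apply there. The only corner at which all four outer letters are bottom arcs is $(p,r)=(0,1)$, namely $a_1a_qa_{n+1}=a_{n+1}a_qa_1$ --- the \emph{smallest} $r$, which your downward implications can never reach. (The scheme is salvageable because the same multiply--rewrite--cancel trick also proves the upward implications $E_{p,r}\Rightarrow E_{p,r+1}$ and $E_{p,r}\Rightarrow E_{p+1,r}$, but you did not establish that direction.)

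Moreover, even at the correct corner, ``Lemma \ref{lemma1} applies directly'' is not accurate: that lemma only shifts two-letter subwords, whereas $a_1a_qa_{n+1}=a_{n+1}a_qa_1$ is a three-letter identity in which charge must be pushed \emph{across} $a_q$. It does follow from Lemma \ref{lemma1} plus cancellativity, but only by the same multiply--rewrite--cancel device you used for propagation:
\begin{equation*}
\begin{split}
a_{n+1}(a_1a_qa_{n+1})&=a_na_0a_qa_{n+1}=a_na_{n-q+1}a_{n+1}a_{n+1}\\
&=a_na_{n-q+1}a_1a_1=a_na_na_qa_1=a_{n+1}(a_{n+1}a_qa_1),
\end{split}
\end{equation*}
followed by left cancellation. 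Note that no parity split on $q$ occurs here --- the parity issues are already absorbed inside Lemma \ref{lemma1} --- so your proposed case analysis (``odd $q$ by crossing relations, even $q$ via $a_0a_0=a_1a_1$'') indicates the step you actually need is missing rather than worked out. For comparison, the paper needs no induction at all: it left-multiplies by $a_{(p+1)n+1}$, slides the leading top pair to the wraparound position via $a_{(p+1)n+1}a_{pn+1}=a_na_0$, pushes the charge through $a_q$ by the chain $a_0a_q=a_{n-q+1}a_{n+1}$, $a_{n+1}a_{rn+1}=a_1a_{(r-1)n+1}$, $a_{n-q+1}a_1=a_na_q$, restores the leading pair to $a_{(p+1)n+1}a_{(p+1)n+1}$, and cancels --- one computation covering all $p,q,r$ at once. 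A repaired version of your induction would have to reproduce exactly this chain at the corner, so the induction adds machinery without removing the hard step.
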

\begin{proof}
Consider
\begin{equation*}
\begin{split}
a_{(p+1)n+1}a_{pn+1}a_qa_{rn+1}&=a_na_0a_qa_{rn+1} \\
&=a_na_{n-q+1}a_{n+1}a_{rn+1} \\
&=a_na_{n-q+1}a_1a_{(r-1)n+1} \\
&=a_na_na_qa_{(r-1)n+1} \\
&=a_{n+1}a_{n+1}a_qa_{(r-1)n+1} \\
&=a_{(p+1)n+1}a_{(p+1)n+1}a_qa_{(r-1)n+1}.
\end{split}
\end{equation*}
Hence $a_{pn+1}a_qa_{rn+1}=a_{(p+1)n+1}a_qa_{(r-1)n+1}$.
\end{proof}
\begin{lemma} \label{trans1}
The equality $a_pa_qa_r=a_{p+1}a_qa_{r-1}$ is true in $M(\mathfrak{dtw}_n^l)$ for all values $p,q,r$ such that $p\in\{0,1,\dots,n\}$, $r\in\{1,2,\dots,n+1\}$, $q\in\{0,2n+1,\dots,(l-1)n+1\}$.
\end{lemma}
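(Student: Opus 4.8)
The statement is the bottom-strand analogue of Lemma \ref{trans2}: in that lemma the letter $a_q$ being transported across lies in the top family $\{a_{jn+1}\}$ while the two flanking letters lie in the bottom family $\{a_0,\dots,a_n\}$, whereas here the roles of the two twist regions are exchanged. Accordingly, my plan is to reproduce the computation of Lemma \ref{trans2} with the two families interchanged, exploiting the up--down symmetry of the diagram $\mathfrak{dtw}_n^l$. Concretely, I would left-multiply the desired identity by $a_{p+1}$ and prove
\[
a_{p+1}a_pa_qa_r=a_{p+1}\bigl(a_{p+1}a_qa_{r-1}\bigr);
\]
since $M(\mathfrak{dtw}_n^l)$ is cancellative, left-cancelling $a_{p+1}$ then yields the lemma.

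To carry this out, write $q=q'n+1$ with $q'\in\{2,\dots,l\}$, using the convention $a_{ln+1}=a_0$ so that $q'=l$ records the case $q=0$. The chain I propose is
\begin{align*}
a_{p+1}a_pa_qa_r
&= a_{n+1}a_na_qa_r \\
&= a_{n+1}a_{(l+1-q')n+1}a_1a_r \\
&= a_{n+1}a_{(l+1-q')n+1}a_0a_{r-1} \\
&= a_{n+1}a_{n+1}a_qa_{r-1} \\
&= a_{p+1}a_{p+1}a_qa_{r-1}.
\end{align*}
The first equality slides the consecutive bottom pair $a_{p+1}a_p$ up to $a_{n+1}a_n$ by Lemma \ref{lemma1} (legitimate since $p+1\le n+1$); the second shifts the top pair $a_na_q$ to $a_{(l+1-q')n+1}a_1$; the third is the transfer relation $a_1a_r=a_0a_{r-1}$, a single bottom relation that effects the decrease $r\mapsto r-1$; the fourth shifts the top pair $a_{(l+1-q')n+1}a_0$ back to $a_{n+1}a_q$; and the last is the diagonal identity $a_{n+1}a_{n+1}=a_{p+1}a_{p+1}$, which is the $j=0$ instance of Lemma \ref{lemma1}. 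The hypotheses are exactly what keep every index legal: $p\in\{0,\dots,n\}$ makes the first slide land at $a_{n+1}a_n$, $r\in\{1,\dots,n+1\}$ makes $a_0a_{r-1}$ well-defined, and $q'\in\{2,\dots,l\}$ forces $l+1-q'\in\{1,\dots,l-1\}$, so that $a_{(l+1-q')n+1}$ is a genuine top arc distinct from the four junction letters $a_0,a_1,a_n,a_{n+1}$; in particular $q\notin\{1,n,n+1\}$, which is precisely what prevents $a_{(l+1-q')n+1}$ from collapsing onto a junction letter.

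The one step that is not a verbatim application of a displayed relation --- and hence the main point to nail down --- is the second, $a_na_q=a_{(l+1-q')n+1}a_1$, which shifts a \emph{non-consecutive, decreasing} pair inside the top family. This is the exact mirror of the step $a_{n-q+1}a_1=a_na_q$ used in the proof of Lemma \ref{trans2} (there a non-consecutive decreasing pair inside the bottom family), and I would justify it the same way: the general pairwise shift $a_{an+1}a_{bn+1}=a_{(a+k)n+1}a_{(b+k)n+1}$ within a single family follows from the consecutive relations by the sandwiching argument already used to prove Lemma \ref{lemma1} and its top counterpart. Everything else is the routine index bookkeeping inherited from that symmetry, so I expect no genuinely new difficulty to arise beyond reproducing the computation of Lemma \ref{trans2} in mirrored form.
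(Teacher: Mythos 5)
Your proof is correct and is essentially the paper's own argument in mirror image: the paper likewise left-multiplies by $a_{p+1}$, transports the decrement $r\mapsto r-1$ through the top twist region via the junction arcs, and cancels, except that it slides $a_{p+1}a_p$ down to $a_1a_0$ (treating $q=0$ by a separate two-step computation) where you slide up to $a_{n+1}a_n$ and handle all $q$ uniformly via $a_n=a_{(l+1)n+1}$. One aside in your write-up is inaccurate but harmless: when $q=0$ (so $q'=l$) the letter $a_{(l+1-q')n+1}=a_{n+1}$ is in fact a junction letter, yet every step of your chain still goes through, and the non-consecutive decreasing shift you flag as the main point to nail down is used equally implicitly in the paper's own proof of Lemma~\ref{trans2}.
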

\begin{proof}
Suppose $q=0$. Then
\begin{equation*}
a_pa_0a_r=a_{p+1}a_1a_r=a_{p+1}a_0a_{r-1}.
\end{equation*}
Suppose $q\in\{2n+1,\dots,(l-1)n+1\}$. Consider
\begin{equation*}
\begin{split}
a_{p+1}a_pa_{kn+1}a_r&=a_1a_0a_{kn+1}a_r \\
&=a_1a_{(l-k)n+1}a_1a_r \\
&=a_1a_{(l-k)n+1}a_0a_{r-1} \\
&=a_{kn+1}a_0a_0a_{r-1} \\
&=a_{kn+1}a_1a_1a_{r-1} \\
&=a_{kn+1}a_{(k-1)n+1}a_{(k-1)n+1}a_{r-1} \\
&=a_{n+1}a_1a_{(k-1)n+1}a_{r-1} \\
&=a_{2n+1}a_{n+1}a_{(k-1)n+1}a_{r-1} \\
&=a_{n+1}a_{n+1}a_{kn+1}a_{r-1} \\
&=a_{p+1}a_{p+1}a_{kn+1}a_{r-1}.
\end{split}
\end{equation*}
Hence $a_pa_qa_r=a_{p+1}a_qa_{r-1}$.
\end{proof}

Canonical words in $C_{n,l}^+$ will be defined as words of the form $000^{t-2}$ or $c00^{t-2}$ or $0c0^{t-2}$ or $dc0^{t-2}$, where $t\ge 2$ is the length of the word and $c\in \{1,2,\dots,n\}$, $d\in \{2n+1,3n+1,\dots,(l-1)n+1\}$. 

Consider a non-negative integer valued parameter $\pi(w)$ of a word $w$ in $C_{n,l}^+$, which is $0$ if the first two entries in $w$ are $00$ or $c0$ or $0c$ or $dc$ for some $c\in \{1,\dots,n\}$, $d\in \{2n+1,\dots,(l-1)n+1\}$ and which is $1$ otherwise. Define the defect of a word $w=b_1b_2\dots b_t$ in $C_{n,l}^+$ as a word $\pi(w)b_3\dots b_t$. Defects are assumed to be ordered antilexicographically.  
\begin{lemma} \label{lem2}
A word in $C_{n,l}^+$ is canonical if and only if its defect is a word consisting of $0$s.
\end{lemma}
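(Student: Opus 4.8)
The plan is to prove this equivalence directly from the definitions, since both ``being canonical'' and ``having defect equal to a string of $0$s'' turn out to encode the same two requirements on a word $w = b_1 b_2 \dots b_t$ of length $t \ge 2$: a constraint on the prefix $b_1 b_2$ together with the vanishing of the tail $b_3 \dots b_t$. No use of the semigroup relations of Lemmas \ref{lemma1}, \ref{trans2}, \ref{trans1} is needed here; those enter only later, when one reduces an arbitrary word to canonical form by lowering its defect antilexicographically.

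First I would restate canonicity in a single line. The four displayed shapes $000^{t-2}$, $c00^{t-2}$, $0c0^{t-2}$, $dc0^{t-2}$ all have the tail following their first two letters equal to $0^{t-2}$, so $w$ is canonical if and only if its prefix $b_1 b_2$ is one of the four patterns $00$, $c0$, $0c$, $dc$ (with $c \in \{1,\dots,n\}$ and $d \in \{2n+1,\dots,(l-1)n+1\}$) and, simultaneously, $b_3 = b_4 = \dots = b_t = 0$.

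Next I would compare this with the defect. By definition the defect of $w$ is the word $\pi(w) b_3 \dots b_t$, and $\pi(w) = 0$ holds exactly when $b_1 b_2$ is one of the same four patterns $00$, $c0$, $0c$, $dc$, while $\pi(w) = 1$ otherwise. Hence the defect consists entirely of $0$s precisely when its leading entry $\pi(w)$ equals $0$ and each remaining entry $b_3, \dots, b_t$ equals $0$, i.e.\ when $b_1 b_2 \in \{00, c0, 0c, dc\}$ and $b_3 = \dots = b_t = 0$. These are exactly the two conditions characterising canonicity from the previous step, which gives both directions of the lemma at once.

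The one point deserving a careful check, and the closest thing here to an obstacle, is that the prefixes permitted in the canonical forms are literally the prefixes on which $\pi$ vanishes, with nothing left over on either side. I would confirm this by inspecting the complementary patterns: a prefix $b_1 b_2$ with both entries in $\{1,\dots,n\}$, or one led by some $d$ but not followed by an admissible $c \in \{1,\dots,n\}$, yields $\pi(w) = 1$ and is correspondingly absent from the four canonical shapes, so the two descriptions agree on the non-canonical words as well. Finally I would dispose of the degenerate length $t = 2$, where the tail is empty and the defect collapses to the single symbol $\pi(w)$, so that the equivalence reads simply ``$w$ canonical $\iff \pi(w)=0$'', consistent with the general case.
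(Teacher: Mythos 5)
Your proof is correct and is essentially the paper's own argument: the paper's proof is the one-line observation that "the result follows from the form of canonical word," and your write-up simply unpacks that observation, matching the four admissible prefixes $00$, $c0$, $0c$, $dc$ in the definition of $\pi$ against the four canonical shapes and noting that both conditions additionally require $b_3=\dots=b_t=0$. Nothing further is needed.
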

\begin{proof}
The result follows from the form of canonical word.
\end{proof}
\begin{lemma}
Let $u$ be a word in $A^+$. Unless the defect of $\phi(u)$ is a word consisting of $0$s, there is a word $v$ in $A^+$ such that $u=v$ in $M(\mathfrak{dtw}_n^l)$ and the defect of $\phi(v)$ is less than the defect of $\phi(u)$. 
\end{lemma}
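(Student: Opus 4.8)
The plan is to prove the statement by exhibiting a single defect-lowering rewrite; iterating it then drives every word to a canonical one. Because the defining relations of $M(\mathfrak{dtw}_n^l)$ preserve word length, $u$ and any $v$ with $u=v$ satisfy $|\phi(u)|=|\phi(v)|$, so their defects are words of the same length, and the antilexicographic order on words of fixed length is a well-order. Hence it suffices to produce, from a non-canonical $u$, one word $v$ with $u=v$ in $M(\mathfrak{dtw}_n^l)$ whose defect is strictly smaller. By Lemma \ref{lem2}, ``non-canonical'' means the defect of $\phi(u)$ contains a nonzero entry. Writing $\phi(u)=b_1b_2\cdots b_t$, I would locate the \emph{rightmost} nonzero entry of the defect $\pi(\phi(u))\,b_3\cdots b_t$ and split into two cases: (A) the rightmost nonzero entry is some $b_m$ with $3\le m\le t$, so that $b_{m+1}=\cdots=b_t=0$; (B) every $b_i$ with $i\ge 3$ vanishes but $\pi(\phi(u))=1$.

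Case (A) is the heart of the argument. Here I would aim to strictly decrease $b_m$ while leaving the trailing zeros $b_{m+1},\dots,b_t$ untouched; since those occupy the most significant (rightmost) defect positions and $b_m$ sits just to their left, any such decrease lowers the defect regardless of what happens to $b_1,\dots,b_{m-1}$ and to $\pi$. The tools for lowering a last letter are exactly Lemmas \ref{trans1} and \ref{trans2}, applied to the triple $b_{m-2}b_{m-1}b_m$ (legitimate since $m\ge 3$): if $b_m$ is a bottom index in $\{1,\dots,n+1\}$ I would invoke Lemma \ref{trans1}, which requires the middle letter $b_{m-1}$ to be \emph{special}, i.e.\ in $\{0,2n+1,\dots,(l-1)n+1\}$; if $b_m$ is a top index in $\{2n+1,\dots,(l-1)n+1\}$ I would invoke Lemma \ref{trans2}, which requires $b_{m-1}$ to be \emph{non-special}, i.e.\ in $\{1,\dots,n+1\}$. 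When the middle letter already has the right type the lemma fires immediately and $b_m$ drops by one notch. Otherwise I would first normalize $b_{m-2}b_{m-1}$ (and sometimes the pair $b_{m-1}b_m$ itself) using the index-shift relations: Lemma \ref{lemma1} for bottom arcs, the analogous shift lemma for top arcs, and the auxiliary identities $a_0a_0=a_1a_1$ and $a_0a_j=a_1a_{j+1}$ established inside the proof of Lemma \ref{lemma1}. In several subcases the slide itself already lowers $b_m$; for instance an ascending bottom pair with $0<b_{m-1}\le b_m$ slides down to $a_0a_{b_m-b_{m-1}}$ by Lemma \ref{lemma1}, so no three-letter move is needed.

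Case (B) is lighter: the word is $b_1b_2\,0^{t-2}$ and I must only turn the pair $b_1b_2$ into one of the good forms $00,\ c0,\ 0c,\ dc$ (with $c\in\{1,\dots,n\}$ and $d$ special), which is precisely what drives $\pi$ from $1$ to $0$. Here I would run through the finitely many ``bad'' pairs (top second letter; top first letter with $b_2=0$; non-special first letter with $b_2\in\{1,\dots,n\}$) and eliminate each using the same pair relations $a_0a_0=a_1a_1$, $a_0a_j=a_1a_{j+1}$, their top-arc analogues, and the shift lemmas, taking care that no move resurrects a nonzero entry in position $3$ or later.

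The main obstacle I anticipate is entirely inside Case (A): guaranteeing that the correctly typed middle letter can always be installed. The difficulty is that the shift lemma of Lemma \ref{lemma1} (and its top-arc counterpart) slides only \emph{ascending or flat} pairs freely, while pairs $b_{m-2}b_{m-1}$ or $b_{m-1}b_m$ that descend are governed merely by the weaker descent-by-one identities; so the bookkeeping must ensure that in every configuration of bottom/top and special/non-special letters one can reach a state in which Lemma \ref{trans1} or \ref{trans2} legitimately applies and its side condition $b_{m-2}\in\{0,\dots,n\}$ (respectively the top-arc range) is met. The boundary values, namely $b_m=n+1$ which is simultaneously a bottom and a top index, the extreme top index $(l-1)n+1$, and the wrap-around convention $a_{ln+1}=a_0$, will each demand a dedicated subcase. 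Once every configuration is shown to admit a defect-lowering move, the well-ordering of the antilexicographic order finishes the proof and, together with Lemmas \ref{mainlemma} and \ref{lem2}, establishes Theorem \ref{main}.
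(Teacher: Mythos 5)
Your skeleton coincides with the paper's: the same two cases (a nonzero entry at some position $d\ge 3$ versus only $\pi=1$), the same toolbox (Lemma \ref{lemma1}, its top-arc analogue, Lemmas \ref{trans1} and \ref{trans2}), and the same antilexicographic descent. The problem is that the decisive step of your plan is left undone, and that step is not routine bookkeeping --- it \emph{is} the lemma. What has to be proved is exactly that every type-configuration of letters around a nonzero entry admits a defect-lowering move, and you defer precisely this (``the bookkeeping must ensure that in every configuration \dots'', ``Once every configuration is shown to admit a defect-lowering move \dots''), which reduces the lemma to itself. The paper's proof consists of nothing but that enumeration: writing $u=u'a_pa_qa_ru''$ with $r\ne 0$ at position $d\ge 3$, it cases on whether $q$ is special, whether $r$ is a top index, and whether $p$ is special, exhibiting an explicit $v$ in each case; for the $\pi$-only case it lists seven explicit configurations of the pair $b_1b_2$, each with an explicit fix.

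Beyond incompleteness, two of your structural choices make the deferred work strictly harder than what the paper does, and one of them cannot be finished with the tools you allow yourself. First, by insisting on decreasing the rightmost nonzero entry $b_m$ itself, you create the middle-letter ``normalization'' problem; the paper largely avoids it because in antilexicographic order it suffices to decrease \emph{some} defect entry while fixing everything to its right, so when $b_{m-2},b_{m-1}$ are non-special and $b_m$ is a top index it simply slides the pair $(b_{m-2},b_{m-1})$ down and never touches $b_m$. Your scheme, by contrast, cascades: retyping $b_{m-2}$ is only possible by sliding a pair containing it, which simultaneously retypes $b_{m-1}$ and can destroy the hypothesis of Lemma \ref{trans1} or \ref{trans2} you were trying to install. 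Concretely, take $(b_{m-2},b_{m-1},b_m)=(n+1,\,2n+1,\,c)$ with $c\in\{1,\dots,n\}$ at positions $1,2,3$ and $l\ge 3$: Lemma \ref{trans1} fails since $n+1\notin\{0,\dots,n\}$, Lemma \ref{trans2} fails since $c$ is not a top index, and after the only available slide of $(n+1,2n+1)$ the middle letter becomes $n+1$, which is no longer special, so neither lemma fires; the only exit is a \emph{descending} bottom-pair slide, which --- as you yourself note --- is not covered by the statement of Lemma \ref{lemma1} and would have to be proved separately (e.g.\ via the reversal anti-automorphism coming from the paired relations $xy=yz$, $yx=zy$). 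This boundary is genuinely delicate: it is exactly where the paper's own listed move (its case with $q$ special and $p,r$ non-special, which invokes Lemma \ref{trans1}) also strains its side condition $p\in\{0,\dots,n\}$. Second, in your Case (B) the declared toolbox (shift lemmas plus $a_0a_0=a_1a_1$, $a_0a_j=a_1a_{j+1}$ and top analogues) does not generate the moves needed for mixed pairs: for $b_1\in\{1,\dots,n\}$ and $b_2=kn+1$ a top index, one needs the swap identity $a_{b_1}a_{kn+1}=a_{(l-k+1)n+1}a_{n-b_1+1}$ (and analogues for the pairs $(n+1,kn+1)$, $(kn+1,0)$, $(0,kn+1)$), each obtained in the paper by a fresh three-letter computation combined with cancellativity. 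Without deriving these identities your Case (B) does not close, and with them it is no longer ``lighter'' --- it is the second half of the paper's proof.
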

\begin{proof}
Suppose the defect of $\phi(u)$ has a non-zero entry at a position which is not the first one. This means that at some position $d\ge 3$ there is a non-zero entry $r$ in $\phi(u)$. Let $u=u^{\prime}a_pa_qa_ru^{\prime\prime}$, where $u^{\prime}, u^{\prime\prime}\in A^+$ and $p,q,r\in C_{n,l}$ with $r\neq 0$.
\begin{enumerate}
\item[(1)]Suppose $q\not\in\{0,2n+1,3n+1,\dots,(l-1)n+1\}$.
 \begin{enumerate}
 \item[1.]If $r\not\in\{2n+1,\dots,(l-1)n+1\}$, then we define 
 \begin{equation*}
 v=u^{\prime}a_pa_{q-1}a_{r-1}u^{\prime\prime}. 
 \end{equation*}
 \item[2.]If $r\in\{2n+1,3n+1,\dots,(l-1)n+1\}$.
  \begin{enumerate}
  \item[$\cdot$]If $p\not\in\{0,2n+1,\dots,(l-1)n+1\}$, then we define 
  \begin{equation*}
  v=u^{\prime}a_{p-1}a_{q-1}a_ru^{\prime\prime}.
  \end{equation*}
  \item[$\cdot$]If $p\in \{0,2n+1,\dots,(l-1)n+1\}$, then  define   
  \begin{equation*}
   v=u^{\prime}a_{(k^{\prime}+1)n+1}a_qa_{(k-1)n+1}u^{\prime\prime}, 
   \end{equation*}
   where $p=k^{\prime}n+1, r=kn+1$. The words $u$ and $v$ are equal by the Lemma \ref{trans2}.
  \end{enumerate}
 \end{enumerate}
\item[(2)]Suppose $q\in \{0,2n+1,\dots,(l-1)n+1\}$.
 \begin{enumerate}
 \item[1.]If $p\in\{2n+1,\dots,(l-1)n+1\}$ or $r\in\{2n+1,\dots,(l-1)n+1\}$, then we define
 \begin{equation*}
 v=u^{\prime}a_{(k^{\prime}-1)n+1}a_{(k-1)n+1}a_ru^{\prime\prime}, 
 \end{equation*}
 where $p=k^{\prime}n+1,q=kn+1$, or 
 \begin{equation*}
 v=u^{\prime}a_pa_{(k-1)n+1}a_{(k^{\prime\prime}-1)n+1}u^{\prime\prime},
 \end{equation*}
 where $q=kn+1, r=k^{\prime\prime}n+1$.
 \item[2.]If $p,r\not\in\{2n+1,3n+1\dots,(l-1)n+1\}$, then we define 
 \begin{equation*}
 v=u^{\prime}a_{p+1}a_qa_{r-1}u^{\prime\prime}. 
 \end{equation*}
 By the Lemma \ref{trans1}, $u=v$.
 \end{enumerate}
\end{enumerate}
In each case, $u=v$ in $M(\mathfrak{dtw}_n^l)$, and the defect of $\phi(v)$ is less than the defect of $\phi(u)$.

Suppose the defect of $\phi(u)$ has a non-zero entry only at the first position. Let $u=a_pa_qu^{\prime}$, where $u^{\prime}\in A^+$ and $p,q\in C_n^l$.
\begin{enumerate}
\item[(1)]If $p,q\in\{1,2,\dots,n+1\}$, let $m=\min(p,q)$, then define 
\begin{equation*}
v=a_{p-m}a_{q-m}u^{\prime}.
\end{equation*}
\item[(2)]If $p,q\in\{2n+1,\dots,(l-1)n+1\}$, let $p=kn+1,q=k^{\prime}n+1$ and $m=\min(k,k^{\prime})$. If $m=k^{\prime}$, then we define 
\begin{equation*}
v=a_{(k-m)n+1}a_{(k^{\prime}-m)n+1}u^{\prime}.
\end{equation*}
If $m=k$, then we use the following case $(3)$.
\item[(3)]If $p\in\{1,2,\dots,n\}, q\in\{2n+1,\dots,(l-1)n+1\}$, then consider
\begin{equation*}
\begin{split}
a_pa_{kn+1}a_{n-p+1}&=a_0a_{kn+1}a_{n+1} \\
&=a_{(l-k)n+1}a_1a_{n+1} \\
&=a_{(l-k+1)n+1}a_{n+1}a_{n+1} \\
&=a_{(l-k+1)n+1}a_{n-p+1}a_{n-p+1}.
\end{split}
\end{equation*}
Hence $a_pa_{kn+1}=a_{(l-k+1)n+1}a_{n-p+1}$. Then we define 
\begin{equation*}
v=a_{(l-k+1)n+1}a_{n-p+1}u^{\prime}, 
\end{equation*}
where $q=kn+1$.
\item[(4)]If $p\in\{2n+1\dots,(l-1)n+1\}, q=n+1$, then we define 
\begin{equation*}
v=a_{(k-1)n+1}a_1u^{\prime}, 
\end{equation*}
where $p=kn+1$.
\item[(5)]If $p=n+1, q\in\{2n+1,\dots,(l-1)n+1\}$, then we consider
\begin{equation*}
a_{n+1}a_{kn+1}=a_{(l-k+1)n+1}a_0=a_{(l-k+2)n+1}a_n.
\end{equation*}
Then we define 
\begin{equation*}
v=a_{(l-k+2)n+1}a_nu^{\prime}.
\end{equation*}
\item[(6)]If $p\in\{n+1,2n+1,\dots,(l-1)n+1\}, q=0$, then we consider 
\begin{equation*}
a_{kn+1}a_0=a_1a_{(l-k)n+1}=a_{(k+1)n+1}a_n.
\end{equation*}
Then we define 
\begin{equation*}
v=a_{(k+1)n+1}a_nu^{\prime}, 
\end{equation*}
where $p=kn+1$.
\item[(7)]If $p=0, q\in\{n+1,2n+1,\dots,(l-1)n+1\}$, then consider
\begin{equation*}
a_0a_{kn+1}=a_{(l-k)n+1}a_1
\end{equation*}
Then we define 
\begin{equation*}
v=a_{(l-k)n+1}a_1u^{\prime}, 
\end{equation*}
where $q=kn+1$.
\end{enumerate}
In each case, $u=v$ in $M(\mathfrak{dtw}_n^l)$, and the defect $\phi(v)$ is a word consisting of $0$s. 
\end{proof}
Then we have the following corollary.
\begin{corollary}
Every word in $A^+$ is equal in $M(\mathfrak{dtw}_n^l)$ to a word in $A^+$ which is mapped by $\phi$ to a word with a defect consisting of $0$s.
\end{corollary}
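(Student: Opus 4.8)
The plan is to derive the Corollary from the preceding Lemma by a descent (termination) argument on defects, the length being held fixed throughout.

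First I would record the single structural fact that makes the descent work: every defining relation of $M(\mathfrak{dtw}_n^l)$ equates two words of equal length (all the relations $a_{i-1}a_i=a_ia_{i+1}$, $a_ia_{i-1}=a_{i+1}a_i$ and their top-crossing analogues have length $2$ on each side), so the congruence $\kappa$ they generate is length-preserving. Consequently, whenever $u=v$ in $M(\mathfrak{dtw}_n^l)$ we have $|u|=|v|$, and since $\phi^+$ sends letters to letters it is itself length-preserving, so $|\phi(u)|=|\phi(v)|$. In particular the word $v$ supplied by the preceding Lemma has the same length $t$ as $u$. Thus it suffices to work inside the set of words of a fixed length $t$ (length-one words being trivially of the required form, so one may assume $t\ge 2$, for which the defect $\pi(w)b_3\cdots b_t$ is defined).

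Next I would observe that for words of length $t$ the defect is a string of exactly $t-1$ symbols, namely $\pi(w)\in\{0,1\}$ followed by the $t-2$ letters $b_3,\dots,b_t$, each of which lies in the finite set $C_{n,l}$. Hence only finitely many distinct defects occur among words of length $t$, and the antilexicographic order restricts to a total order on this finite set. A totally ordered finite set admits no infinite strictly descending chain, so any strictly decreasing sequence of defects terminates after finitely many steps.

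With these two observations the Corollary follows by iteration. Given $u\in A^+$, set $u_0=u$; so long as the defect of $\phi(u_m)$ is not a word consisting of $0$s, the preceding Lemma produces $u_{m+1}\in A^+$ with $u_{m+1}=u_m$ in $M(\mathfrak{dtw}_n^l)$ and with the defect of $\phi(u_{m+1})$ strictly smaller (antilexicographically) than that of $\phi(u_m)$. By the length-preservation above all the $u_m$ have length $t$, so their defects lie in the finite totally ordered set of the previous paragraph; the strictly decreasing chain must therefore stop at some $u_N$. By the preceding Lemma it can stop only when the defect of $\phi(u_N)$ consists of $0$s, and transitivity of the relation gives $u=u_N$ in $M(\mathfrak{dtw}_n^l)$, which is exactly the desired conclusion. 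The only point that genuinely needs care — the ``main obstacle'' — is the termination of this descent; but that reduces entirely to the finiteness of the defect set at fixed length $t$, which in turn rests on the length-preservation of the relations, after which the argument is routine.
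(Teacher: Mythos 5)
Your proposal is correct and coincides with the argument the paper leaves implicit: the corollary is stated there without proof, as the evident descent on defects via the preceding Lemma, which is exactly what you carry out (including the termination detail---finiteness of defects at fixed length---that the paper does not bother to write down). One small repair to your first step: since $\kappa$ is the least \emph{cancellative} congruence containing the defining relations, not merely the congruence they generate, length-preservation of equality in $M(\mathfrak{dtw}_n^l)$ is best justified by noting that the length map $A^+\to(\mathbb{N},+)$ is a homomorphism into a cancellative semigroup that identifies the two sides of every defining relation, and hence factors through $A^+/\kappa$ by the universal property of the least cancellative congruence; this yields $|u|=|v|$ whenever $u\,\kappa\,v$, after which your argument goes through verbatim.
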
 \label{cor1}
{\noindent{\it{Proof of theorem \ref{main}.}}} The relations in $M(\mathfrak{dtw}_n^l)$ are listed in the proof of Lemma \ref{lemma1}. For each relation $u=v$ the words $\phi(u)$ and $\phi(v)$ have the same length and the same alternating sum calculated in $\mathbb{Z}_{ln+1}$. Thus by Lemma \ref{mainlemma}, $\phi^+$ induces a homomorphism $\psi : M(\mathfrak{dtw}_n^l)\rightarrow C_{n,l}^+/\sim$. \\
Consider two canonical words $u, v$ which are $\sim$ equivalent. We shall show that each class of $\sim$ contains at most one canonical words.
\begin{enumerate}
\item[(1)]Suppose their alternating sums are both $0$.
 \begin{enumerate}
 \item[1.]If $u, v$ are form of $c00^{t-2}$ or $0c0^{t-2}$, where $c\in\{1,2,\dots,n\}$, then since the canonical word can have at most one non-zero entry, both words consist only of $0$s and, therefore are equal.
 \item[2.]If $u$ or $v$ is form of $dc0^{t-2}$, where $c\in\{1,2,\dots,n\}$, $d\in\{2n+1,dots,(l-1)n+1\}$, then the alternating sum of $dc0^{t-2}$ is $d-c$. If $d\neq 0$ or $c\neq 0$, then since $d-c=0$, this contradicts $c\in\{1,2,\dots,n\}$, $d\in\{2n+1,\dots,(l-1)n+1\}$. Therefore both words $u,v$ consists only of $0$s, and are equal.
 \end{enumerate}
\item[(2)]Suppose two canonical words share the same non-zero alternating sum.
 \begin{enumerate}
 \item[1.]If $u=c_100^{t-2}$, $v=c_200^{t-2}$, then since both alternating sum is the same, $c_1=c_2$. Thus $u=v$.
 \item[2.]If $u=0c_10^{t-2}$, $v=0c_20^{t-2}$, then $u=v$ by the same reason of $1$.
 \item[3.]If $u=c_100^{t-2}$, $v=0c_20^{t-2}$, then $c_1=-c_2$ in $\mathbb{Z}_{nl+1}$. Since $c_1,c_2\in\{1,2,\dots,n\}$, this case is impossible. 
 \item[4.]If $u=c_100^{t-2}$, $v=0c_20^{t-2}$, then $c_1=d_2-c_2$ in $\mathbb{Z}_{ln+1}$. Since $c_1,c_2\in\{1,2,\dots,n\}$, $d\in\{2n+1,\dots,(l-1)n+1\}$, this case is impossible ($c_1+c_2\le 2n, d_2\ge 2n$).
 \item[5.]If $u=0c_10^{t-2}$, $v=d_2c_20^{t-2}$, then $-c_1=d_2-c_2$ in $\mathbb{Z}_{ln+1}$. If $c_2-c_1>0$, this case is impossible. If $c_2-c_1>0$, this case is also impossible ($ln+1+c_2-c_1>(l-1)n+1$, $d_2\le (l-1)n+1$). 
 \item[6.]If $u=d_1c_10^{t-2}$, $v=d_2c_20^{t-2}$, then $d_1-c_1=d_2-c_2$ in $\mathbb{Z}_{ln+1}$. Since $c_1,c_2\in\{1,2,\dots,n\}$, $d_1, d_2\in\{2n+1,\dots,(l-1)n+1\}$, this case is impossible. 
 \end{enumerate}
\end{enumerate}
Thus each class of $\sim$ contains at most one canonical words. \\
Consider a words $w\in C_{n,l}^+$ which has a length $t$ and alternating sum $s$. We shall show that each class of $\sim$ contains at least one canonical word.
\begin{enumerate}
\item[(1)]If $s\in\{0,1,\dots,n\}$, then canonical words $s00^{t-2}$ is $\sim$ equivalent to $w$.
\item[(2)]If $s\in\{(l-1)n+1,\dots,ln+1\}$, let $q=-s$. Then $0q0^{t-2}$ is $\sim$ equivalent to $w$.
\item[(3)]If $s\in\{n+1,\dots,(l-1)n\}$, then $w$ is $\sim$ equivalent to $d(-c)0^{t-2}$ for some $c\in\{1,2,\dots,n\}$, $d\in\{2n+1,\dots,(l-1)n+1\}$.
\end{enumerate}
Thus each class of $\sim$ contains at least one canonical word. \\
By Corollary \ref{cor1} and Lemma \ref{lem2}, each word in $A^+$ is equal in $M(\mathfrak{dtw}_n^l)$ to a word mapped by $\phi$ to a canonical word. Now Theorem \ref{main} follows from Lemma \ref{mainlemma}.

\section{Growth function of knot semigroups}
\subsection{Growth function and skew growth function}
As a cororally of Theorem \ref{torusthm}, \ref{main}, we can compute growth functions of some knot semigroups. First we explain growth functions of a monid. 

Let $M$ be a monid with unit $1$. For $u,v\in M$, we denote
\begin{equation*}
u\mid_{l}v
\end{equation*}
if there exists an element $x\in M$ such that $v=ux$. We define an equivalence relation on $M$ by putting $u\sim v$ if and only if $u\mid_{l}v$ and $v\mid_{l}u$. We consider a monid $M/\sim$.
\begin{definition}
A discrete degree map on a monoid $M$ is a map
\begin{equation*}
{\rm{deg}}:M\longrightarrow\mathbb{R}_{\geq 0}
\end{equation*}
such that
\begin{enumerate}
\item[(1)]$\deg(u)=0$ if and only if $u\sim 1$,
\item[(2)]$\deg(uv)=\deg(u)+\deg(v)$ for any $u,v\in M$,
\item[(3)]$\sharp\{u\in M/\sim\mid\deg(u\leq r)\}<\infty$ for any $r\in\mathbb{R}_{> 0}$.
\end{enumerate}
\end{definition}
Let 
\begin{equation*}
(M/\sim)_{d}=\{u\in M/\sim\mid\deg (u)=d\}.
\end{equation*}
Then we define the growth function of the monoid $M$ by 
\begin{equation*}
P_{M,\deg}(t):=\displaystyle\sum_{u\in M/\sim}t^{\deg(u)}=\displaystyle\sum_{d\in\mathbb{R}_{>0}}\sharp((M/\sim)_d)t^d.
\end{equation*}
A skew growth function $N_{M,\deg}(t)$ is defined in \cite{S} Section 4.2. The we have the following theorem. Let
\begin{equation*}
R_{\mathbb{Z}}=\left\{\displaystyle\sum^{\infty}_{n=0}\mid 
                   \begin{matrix}
                   a_n\in\mathbb{Z}(\forall n\in\mathbb{Z}_{\geq 0}), \text{ and } \{d_n\}_{n\in\mathbb{Z}_{\geq 0}} \text{ is } \\
                   \text{ a sequence in } \mathbb{R}_{\geq 0} \text{ divergent to } +\infty
                   \end{matrix} 
                 \right\}.
\end{equation*}
\begin{theorem}[\cite{S}] \label{inverse}
Let $M$ be a cancellative monoid equipped with a discrete degree map. Then we have the following formula in the ring $R_{\mathbb{Z}}$.
\begin{equation*}
P_{M,\deg}(t)\cdot N_{M,\deg}(t)=1.
\end{equation*} 
\end{theorem}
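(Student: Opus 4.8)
The plan is to prove the identity by Möbius inversion on the left‑divisibility order of $M$, which is precisely the combinatorial content packaged by the skew growth function of \cite{S}~§4.2. Concretely, I would take as the working description of $N_{M,\deg}$ the fact that it is the degree generating series of the Möbius function: writing $\mu$ for the Möbius function of the poset $M/\sim$ ordered by $\mid_l$ (with least element the class of $1$), one has $N_{M,\deg}(t)=\sum_{w\in M/\sim}\mu(1,w)\,t^{\deg(w)}$. The first task is to make $\mu$ legitimate, i.e. to check local finiteness of this order. This is exactly where axiom (3) of the discrete degree map is used: if $w\mid_l v$ then $\deg(v)=\deg(w)+\deg(x)\ge\deg(w)$ by axiom (2), so every interval $[1,v]$ sits inside $\{u\in M/\sim:\deg(u)\le\deg(v)\}$, which is finite by (3). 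Local finiteness guarantees that $\mu(1,\cdot)$ exists and that every series below is a genuine element of $R_{\mathbb{Z}}$.

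Next I would reduce $P_{M,\deg}\cdot N_{M,\deg}=1$ to a single coefficient identity. Setting $a_k=\sharp\{u\in M/\sim:\deg(u)=k\}$, the coefficient of $t^d$ in the product is $\sum_{w}\mu(1,w)\,a_{d-\deg(w)}$, so it suffices to prove that this equals $\delta_{d,0}$ for every $d$.

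The key step, and the one that genuinely uses cancellativity, is a translation bijection that reinterprets the factor $a_{d-\deg(w)}$. For a fixed class $w$, left multiplication $x\mapsto wx$ maps $\{x:\deg(x)=d-\deg(w)\}$ bijectively onto $\{v:\deg(v)=d,\ w\mid_l v\}$: it is surjective since $w\mid_l v$ means $v=wx$ with $\deg(x)=d-\deg(w)$, and injective by left cancellation, as $wx=wx'$ forces $x=x'$. (Left multiplication descends to $M/\sim$ because $u\sim u'$ means $u'=u\eta$ for a unit $\eta$, and then $wu'=wu\eta\sim wu$.) Hence $a_{d-\deg(w)}=\sharp\{v:\deg(v)=d,\ w\mid_l v\}$, and after substituting and exchanging the order of summation the coefficient of $t^d$ becomes $\sum_{v:\,\deg(v)=d}\ \sum_{w\mid_l v}\mu(1,w)$. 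I would then invoke the defining relation of the Möbius function in the incidence algebra, $\sum_{w\in[1,v]}\mu(1,w)=\delta_{1,v}$, to collapse the inner sum; the outer sum then counts the classes $v$ of degree $d$ equal to $1$, which by axiom (1) occurs only for $d=0$, yielding $\delta_{d,0}$ and hence $P_{M,\deg}\cdot N_{M,\deg}=1$.

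I expect the main obstacle to be bookkeeping at the level of $M/\sim$ rather than any deep combinatorics: one must check that $\mid_l$ descends to an honest partial order on $M/\sim$, that the product entering the translation bijection is well defined on $\sim$‑classes, and that the rearrangement of the doubly indexed sums is valid — all of which rest on cancellativity together with the finiteness furnished by the discrete degree map. Once these points are settled, the substance of the argument is the standard Möbius relation, and the theorem follows.
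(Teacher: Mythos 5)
First, a structural remark: the paper itself does not prove this statement at all --- Theorem \ref{inverse} is quoted from \cite{S}, and the paper does not even define $N_{M,\deg}(t)$, deferring to \cite{S} Section 4.2. So your proposal has to be judged against the definition in \cite{S}, and there it has a genuine gap. In \cite{S} the skew growth function is \emph{not} defined as the M\"obius series of the divisibility poset: it is given by an explicit inclusion--exclusion over the atoms of $M$, roughly a sum of terms $(-1)^{\#J}t^{\deg(\delta)}$ over finite subsets $J$ of the set of minimal non-unit classes and over the minimal common multiples $\delta\in\mathrm{mcm}(J)$ (for Artin monoids this specializes to the classical formula $\sum_{J}(-1)^{\#J}t^{\deg \Delta_J}$). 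Your opening move --- ``take as the working description of $N_{M,\deg}$ \dots the degree generating series of the M\"obius function'' --- silently replaces the object appearing in the statement by a different one, and that replacement is exactly where the content of the theorem lives.

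What you do prove, and prove correctly (the descent of $\mid_l$ and of left multiplication to $M/\sim$, local finiteness from axioms (2) and (3), the translation bijection from cancellativity, the M\"obius recursion), is that $P_{M,\deg}$ is invertible in $R_{\mathbb{Z}}$ with inverse the M\"obius series $N_{\mu}(t)=\sum_{w}\mu(1,w)\,t^{\deg(w)}$. But since inverses in the commutative ring $R_{\mathbb{Z}}$ are unique, once your computation is granted, the assertion $P_{M,\deg}\cdot N_{M,\deg}=1$ is \emph{equivalent} to the identification $N_{M,\deg}=N_{\mu}$, i.e.\ to the cross-cut identity $\mu(1,\delta)=\sum_{J:\,\delta\in\mathrm{mcm}(J)}(-1)^{\#J}$ for every class $\delta$. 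That identity is precisely the hard part: it fails for general locally finite posets (take a poset with bottom $\hat 0$, atoms $a,b$, two incomparable minimal upper bounds $x,y$ of $\{a,b\}$, and a top $z$ above $x$ and $y$; then $\mu(\hat 0,z)=-1$, while $z$ lies in no $\mathrm{mcm}(J)$, so the cross-cut sum is $0$), and it holds for divisibility posets of cancellative monoids only because of the monoid structure --- establishing this is the substance of \cite{S}. So your argument proves a correct and relevant statement, but as a proof of the theorem it assumes exactly the step that needed proof.
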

\subsection{Growth function of knot semigroups}
In case of torus knots and double twist knot, we have the following corollary. In the case of knot semigroups we have
\begin{equation*}
\begin{split}
&M(K)/\sim =M(K), \\
&\deg=\mid\cdot\mid.
\end{split}
\end{equation*} 
\begin{corollary}
Let $b$ be an odd integer and $ml$ even integer. Then we have
\begin{equation}
\begin{split}
&P_{M(T(2,n)),\mid\cdot\mid}=\dfrac{(n-1)t+1}{1-t}, \\
&N_{M(T(2,n)),\mid\cdot\mid}=\dfrac{1-t}{(n-1)t+1},
\end{split}
\end{equation}
\begin{equation}
\begin{split}
&P_{\mathfrak{dtw}_m^l,\mid\cdot\mid}=\dfrac{1+(m+l-1)t+(ml-m-l+1)t^2}{1-t}, \\ &N_{\mathfrak{dtw}_m^l,\mid\cdot\mid}=\dfrac{1-t}{1+(m+l-1)t+(ml-m-l+1)t^2}.
\end{split}
\end{equation}
\end{corollary}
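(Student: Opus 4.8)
The plan is to use the structural isomorphisms of Theorems \ref{torusthm} and \ref{main} to replace each knot semigroup by an alternating sum semigroup, and then to read off the growth function by counting classes of each fixed length. Every defining relation of a knot semigroup equates two words of length two, so word-length descends to an additive map on the semigroup; adjoining an identity to obtain a monoid and using that there are only finitely many classes of each length, one checks that $|\cdot|$ satisfies all three axioms of a discrete degree map, so that $P$ and $N$ are defined and $P\cdot N=1$ by Theorem \ref{inverse}. Since $M(K)/{\sim}=M(K)$ in the present situation, $P_{M(K),|\cdot|}(t)=\sum_{d\ge0}c_d\,t^d$, where $c_d$ is the number of semigroup elements of length $d$ and $c_0=1$ is the adjoined identity. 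As the class of a word in $\AS(G,B)$ is determined by its length and its alternating sum, $c_d$ is exactly the number of values in $G$ realised as alternating sums of length-$d$ words over $B$.

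For the torus knots I would use $M(T(2,n))\cong\AS(\mathbb{Z}_n,\mathbb{Z}_n)$ with $n$ odd. For every $d\ge1$ the word $s\,0\cdots0$ of length $d$ has alternating sum $s$, so all $n$ residues of $\mathbb{Z}_n$ occur and $c_d=n$. Hence $P=1+n\,t/(1-t)=((n-1)t+1)/(1-t)$, and $N=P^{-1}$ follows from Theorem \ref{inverse}.

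For the double twist knots I would use $M(\mathfrak{dtw}_m^l)\cong\AS(\mathbb{Z}_{lm+1},C_{m,l})$ together with the canonical words constructed in the proof of Theorem \ref{main}, each $\sim$-class containing exactly one of them. At length $1$ the alternating sum is the single letter, so $c_1=|C_{m,l}|=m+l$. At length $t\ge2$ the canonical words are $0^t$, $c\,0^{t-1}$, $0\,c\,0^{t-2}$, and $d\,c\,0^{t-2}$ with $c\in\{1,\dots,m\}$ and $d\in\{2m+1,\dots,(l-1)m+1\}$; their alternating sums are $0$, the block $\{1,\dots,m\}$, the block $\{(l-1)m+1,\dots,lm\}$ (coming from $-c$), and the block $\{m+1,\dots,(l-1)m\}$ (coming from $d-c$). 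These blocks are pairwise disjoint and exhaust $\mathbb{Z}_{lm+1}$, so $c_t=lm+1$ for all $t\ge2$. Therefore $P=1+(m+l)t+(lm+1)\,t^2/(1-t)$, which on clearing the denominator gives the asserted numerator $1+(m+l-1)t+(ml-m-l+1)t^2$; then $N=P^{-1}$ again by Theorem \ref{inverse}.

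The one delicate point is the jump of $c_d$ from $m+l$ at $d=1$ to $lm+1$ at $d\ge2$. A single letter of $C_{m,l}$ can only attain the $m+l$ residues of the generating set, but already at length two the differences $d-c$ fill in precisely the middle residues $\{m+1,\dots,(l-1)m\}$ that no single letter reaches, so that all $lm+1$ residues become available. Verifying that the four families of alternating sums above are disjoint and cover $\mathbb{Z}_{lm+1}$ is the heart of the computation, and it is exactly this enlargement from $m+l$ to $lm+1$ that produces the quadratic term in the numerator; everything else is the routine power-series bookkeeping indicated above.
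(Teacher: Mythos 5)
Your proposal is correct and follows essentially the same route as the paper: use Theorems \ref{torusthm} and \ref{main} to identify the knot semigroups with alternating sum semigroups, count the elements of each length ($n$ for all $d\ge 1$ in the torus case; $m+l$ at $d=1$ and $lm+1$ for $d\ge 2$ in the double twist case), sum the resulting geometric series to get $P$, and obtain $N$ from the inversion formula of Theorem \ref{inverse}. Your explicit verification via canonical words that the alternating-sum blocks are disjoint and exhaust $\mathbb{Z}_{lm+1}$ simply spells out the count that the paper asserts directly from Theorem \ref{main}.
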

\begin{proof}
By Theorem \ref{torusthm}, we have
\begin{equation*}
(M(T(2,n)))_0=1,(M(T(2,n)))_d=n (d\geq 1).
\end{equation*}
Thus
\begin{equation*}
\begin{split}
P_{M(T(2,n)),\mid\cdot\mid}&=1+n\displaystyle\sum_{i=0}^{\infty}t^i\\
&=1+\dfrac{tn}{1-t}\\
&=\dfrac{(n-1)t+1}{1-t}.
\end{split}
\end{equation*}
$N_{M(T(2,n)),\mid\cdot\mid}$ can be computed by Theorem \ref{inverse}.

By Theorem \ref{main}, we have
\begin{equation*}
\begin{split}
&(M(\mathfrak{dtw}_m^l))_0=1,(M(\mathfrak{dtw}_m^l))_1=m+l,  \\
&(M(\mathfrak{dtw}_m^l))_d=lm+1(d\geq 2).
\end{split}
\end{equation*}
Thus 
\begin{equation*}
\begin{split}
P_{M(\mathfrak{dtw}_m^l),\mid\cdot\mid}&=1+(m+l)t+(lm+1)\displaystyle\sum_{i=2}^{\infty}t^i\\
&=1+(m+l)t+\dfrac{(lm+1)t^2}{1-t}\\
&=\dfrac{1+(m+l-1)t+(lm-m-l+1)t^2}{1-t}.
\end{split}
\end{equation*}
$N_{M(\mathfrak{dtw}_m^l),\mid\cdot\mid}$ can be computed by Theorem \ref{inverse}.
\end{proof}

\section{Link invariants constructed by Gelfand-Kirillov dimensions}
In this section, we consider the growth of semigroup algebras of knot semigroups. To investigate the growth, we use a Gelfand-Kirillov dimension. As an application and the main result, we construct a link invariant.

\subsection{Gelfand-Kirillov dimensions}
Let $k$ be a field and $A$ be a finitely generated algebra over $k$. Let $V$ be a subspace of A, and we denote by $V^n$ the subspace spanned by all products of elements of $V$ of length $n$. The subspace $V$ is called generating subspace if $V$ is finite-dimensional subspace of $A$ which generate $A$ as algebra, and contains $1$. For a generating subspace $V$, we define the function $f_V(n)$ such that
\begin{equation*}
f_V(n)=\dim_kV^n.
\end{equation*}
An algebra $A$ is said to have polynomial growth if there are positive real numbers $c,r$ such that
\begin{equation*}
f_V(n)\leq cn^r
\end{equation*}
for all $n$. Then this definition is independent of the choice of generating spaces.
\begin{lemma}
Let $A$ be a finitely generated $k$-algebra, let $V, W$ be generating subspaces. If $f_V(n)\leq cn^r$ for all $n$, then there is a $c^{\prime}$ such that $f_W(n)\leq c^{\prime}n^r$ for all $n$.
\end{lemma}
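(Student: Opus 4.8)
The plan is to compare the two generating subspaces by squeezing every power $W^n$ inside a single power of $V$, and then to feed the resulting length estimate into the hypothesis $f_V(n)\le cn^r$. The whole argument rests on two structural facts available here: both $V$ and $W$ are finite-dimensional and contain $1$, and each generates $A$ as an algebra. The containment of $1$ is what makes the chains $V\subseteq V^2\subseteq V^3\subseteq\cdots$ (and likewise for $W$) increasing, since multiplying a product by $1$ keeps it a product of no greater length; generation is what guarantees $\bigcup_{k\ge 1}V^k=A$.

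First I would produce a single exponent $m$ with $W\subseteq V^m$. Choose a basis $w_1,\dots,w_s$ of the finite-dimensional space $W$. Since $V$ generates $A$ and $1\in V$, the increasing union $\bigcup_{k}V^k$ exhausts $A$, so each $w_i$ lies in some $V^{m_i}$. Setting $m=\max_i m_i$ and using $V^{m_i}\subseteq V^m$, I get $w_i\in V^m$ for every $i$, hence $W\subseteq V^m$. Raising this inclusion to the $n$-th power gives $W^n\subseteq (V^m)^n\subseteq V^{mn}$, where the last step is a mere length count: a product of $n$ elements each of length $m$ is a product of length $mn$. Taking dimensions yields $f_W(n)=\dim_k W^n\le \dim_k V^{mn}=f_V(mn)$. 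Finally, applying the hypothesis with argument $mn$ gives $f_W(n)\le c(mn)^r=(cm^r)n^r$, so the constant $c'=cm^r$ works.

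The one point that needs care — the \emph{main obstacle}, though a mild one — is the uniform choice of the exponent $m$ such that $W\subseteq V^m$. A priori each generator of $W$ might require a different length, and without control over this the estimate would collapse. This is precisely where the finite-dimensionality of $W$ is used: a finite basis lets me take the maximum of finitely many exponents, and the hypothesis $1\in V$ ensures that enlarging the exponent to this maximum loses nothing, since $V^{m_i}\subseteq V^m$ whenever $m_i\le m$. Everything else is a routine monotonicity-and-dimension bookkeeping, and notably the exponent $r$ is unchanged because the substitution $n\mapsto mn$ only alters the multiplicative constant.
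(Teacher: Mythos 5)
Your proof is correct and follows essentially the same route as the paper's: both establish $W\subseteq V^m$ for a single exponent $m$ (the paper cites $A=\bigcup_n V^n$ plus finite-dimensionality of $W$; you spell out the basis-and-maximum argument), then raise to the $n$-th power to get $f_W(n)\le f_V(mn)\le cm^r n^r$. Your write-up simply makes explicit the uniformity step that the paper leaves implicit.
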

\begin{proof}
Suppose $f_V(n)\leq cn^r$. Since $A=\displaystyle\cup V^n$ and $W$ is finite-dimensional, $W\subset V^s$ for some $s$. Then $W^n\subset V^{sn}$. Hence
\begin{equation*}
f_W(n)\leq f_V(sn)\leq cs^rn^r=c^{\prime}n^r.
\end{equation*}
\end{proof}
\begin{definition}
The Gelfand-Kirillov dimension of an algebra $A$ with polynomial growth is defined by
\begin{equation*}
{\rm{GK}}\dim(A)=\inf\{r\mid f_V(n)\leq cn^r\}.
\end{equation*}
If $A$ does not have polynomial growth then we define ${\rm{GK}}\dim(A)=\infty$.
\end{definition}
\begin{example}
(1)Let $A=k[x_1,\dots,x_d]$. If we let V be the space spanned by $\{1,x_1,\dots,x_d\}$, then $V^n$ is the space of polynomials of degree $\leq n$. The dimension of this space is $\begin{pmatrix} n+d \\ d\end{pmatrix}$. Thus ${\rm{GK}}\dim(A)=d$. \\
(2)Let $A=k\langle x_1,\dots,x_d\rangle$ be the noncommutative polynomial algebra. Let V be the space spanned by $\{1,x_1,\dots,x_d\}$. The dimension of this space is $d^n+d^{n-1}+\dots+d+1$. Thus ${\rm{GK}}\dim(A)=\infty$.
\end{example}
\subsection{Link invariants}
We can prove that the Gelfand-Kirillov dimension of semigroup algebra of knot semigrops is a link invariant. Let $L$ be a link. Then we let $k(M(L))$ be a semigroup algebra of a knot semigroup of $L$.
\begin{theorem}
Let $L_1,L_2$ be links. If $L_1\simeq L_2$ then
\begin{equation*}
{\rm{GK}}\dim(k(M(L_1)))={\rm{GK}}\dim(k(M(L_2))).
\end{equation*}
\end{theorem}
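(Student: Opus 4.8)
The plan is to reduce the statement to the invariance of the Gelfand--Kirillov dimension under the three Reidemeister moves. By Reidemeister's theorem, $L_1\simeq L_2$ if and only if some diagram $D_1$ of $L_1$ and some diagram $D_2$ of $L_2$ are related by a finite sequence of moves R1, R2, R3. Since isomorphic algebras share the same Gelfand--Kirillov dimension, and since ${\rm GK}\dim(k(M(D)))$ depends a priori only on the diagram $D$, it suffices to show that a single Reidemeister move applied to a diagram $D$, producing $D'$, does not change ${\rm GK}\dim(k(M(D)))$. I would treat the three moves separately.

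First I would dispose of R1. Tracing the arcs through the new kink shows that the move introduces one new arc letter $b$ together with the two relations $aa=ab$ and $aa=ba$, where $a$ is the old arc. By cancellativity these force $b=a$, so the new generator is redundant and $M(D)\cong M(D')$; the semigroup algebras are then isomorphic and the dimension is unchanged. For R3 the number of undercrossings, hence the number of arc letters, is unchanged; the three crossings merely redistribute, and I would verify directly that the relabelling of arcs induced by the move carries the defining congruence of $M(D)$ onto that of $M(D')$. This again gives an isomorphism $M(D)\cong M(D')$, so the dimension is preserved.

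The move R2 is the heart of the matter, and I expect it to be the main obstacle. Pushing one strand across another creates two undercrossings on the second strand, so the arc running under is split and one genuinely new arc letter $a_1$ appears, subject to relations of the form $ab=ba_1$ and $a_1b=ba$ (here $b$ is the over-passing arc and $a$ the original under-arc, the third local piece being identified with $a$ by cancellativity as in R1). Unlike the situation for the knot group, a semigroup admits no inversion, so $a_1$ cannot be eliminated and in general $M(D')\not\cong M(D)$; the counting function of the semigroup genuinely changes, and this is precisely why only the Gelfand--Kirillov dimension, rather than the semigroup itself, can be expected to be invariant. The decisive step is therefore to show that this change preserves the growth rate. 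I would analyse the rewriting system determined by $ab=ba_1$ and $a_1b=ba$ to control the positions in which $a_1$ may occur in reduced words, and then use the standard properties of the Gelfand--Kirillov dimension (invariance under algebra isomorphism, and the bound ${\rm GK}\dim B\le{\rm GK}\dim A$ for a surjection $A\twoheadrightarrow B$, together with a matching lower bound) to sandwich ${\rm GK}\dim(k(M(D')))$ between two quantities each equal to ${\rm GK}\dim(k(M(D)))$. Making this growth comparison uniform over all R2 configurations, and in particular controlling the interaction of the new letter $a_1$ with the remaining arcs of the diagram, is the delicate point on which the whole argument rests.
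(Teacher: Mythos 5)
Your reduction to the three Reidemeister moves and your treatment of R1 match the paper, but the argument breaks at R3, and R2 is left unproved. For R3 you assert that, since the number of arcs is unchanged, the relabelling of arcs induced by the move carries the defining congruence of $M(D)$ onto that of $M(D')$, giving $M(D)\cong M(D')$. No such relabelling exists. Writing each local crossing as a triple (under-arc pair; over-arc), the diagram before the move has crossings $(a,d;c),(b,e;c),(f,e;d)$ and the diagram after has $(e,b;a),(f,e;c),(d,f;c)$ --- these are exactly the relation sets $J$ and $J'$ in the paper's proof. Any bijection of the six local arc letters carrying one congruence to the other must send over-arcs to over-arcs respecting multiplicity, which forces $c\mapsto c$ and $d\mapsto a$; but then the crossing $(a,d;c)$ would have to map to a crossing of the form $(\ast,a;c)$, and neither $(f,e;c)$ nor $(d,f;c)$ contains the letter $a$. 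In the group setting, R3-invariance is achieved by sending a generator to a conjugate of another generator, and conjugation is exactly what is unavailable in a semigroup --- the same obstruction you correctly identified for R2. This is why the paper never claims that $M$ itself is invariant under R2 or R3.

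For R2 you set up the local presentation correctly ($ab=ba_1$, $a_1b=ba$, outer pieces identified by cancellativity) and correctly observe that $a_1$ cannot be eliminated, but what follows is a plan rather than a proof: you say you would control reduced words and sandwich ${\rm{GK}}\dim(k(M(D')))$ between surjections, and you yourself flag this as the unresolved delicate point. Note also that the proposed sandwich has no obvious footing: the assignment $a_1\mapsto a$ does not define a homomorphism $k(M(D'))\twoheadrightarrow k(M(D))$ (it would force $ab=ba$ in $M(D)$), and in the other direction the identity-on-generators map $k(M(D))\rightarrow k(M(D'))$ is not obviously surjective, since $a_1$ cannot be expressed in the old generators without inverses. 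The paper's route for both R2 and R3 is different: it compares the growth functions of the two semigroup algebras directly, quotienting each by the two-sided ideal generated by $g-1$ for a suitable arc generator $g$ (the new under-arc piece for R2; the arcs $a$ and $e$ for R3), asserting that the two quotients are isomorphic and deducing $f_V(n)=f_{V'}(n)$ for all $n\ge 2$, which forces equality of the Gelfand--Kirillov dimensions. Whatever one thinks of the rigor of that dimension count, it is the step your proposal is missing, and it cannot be replaced by a semigroup isomorphism.
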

\begin{proof}
For a link $L$, we prove ${\rm{GK}}\dim(k(M(L)))$ is invariant under Reidemeister move ${\rm{I,I\hspace{-.1em}I,I\hspace{-.1em}I\hspace{-.1em}I}}$. ${\rm{GK}}\dim(k(M(L)))$ is invariant under Reidemeister move ${\rm{I}}$ since $M(L)$ is. Let $L$,$L^{\prime}$ be the same links, except in the neighborhood of a point where they are as shown in Figure.
\begin{center}
\includegraphics[keepaspectratio,width=5cm,bb=0 0 250 230]{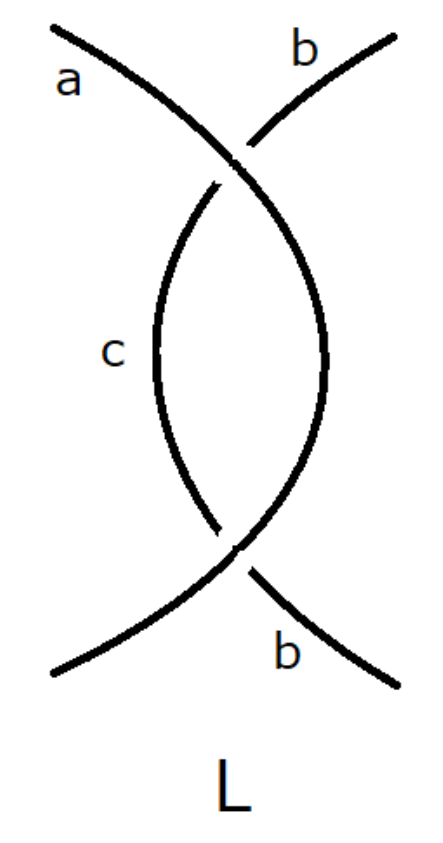}
\includegraphics[keepaspectratio,width=5cm,bb=0 0 250 230]{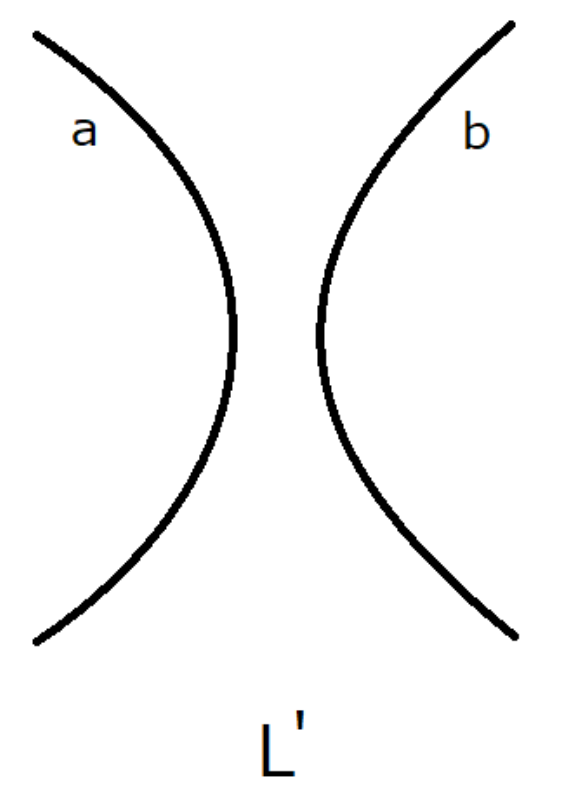}
\end{center}
We can assume ${\rm{GK}}\dim(k(M(L))),{\rm{GK}}\dim(k(M(L^{\prime})))<\infty$. Then
\begin{equation*}
\begin{split}
&k(M(L))\simeq k\langle x_1,\dots,x_m,a,b,c\rangle/\langle I_{a,b}\cup\left\{\begin{matrix}ba-ac \\ ab-ca \end{matrix}\right\}\rangle, \\
&k(M(L^{\prime}))\simeq k\langle x_1,\dots,x_m,a,b\rangle/\langle I_{a,b}\rangle,
\end{split}
\end{equation*}
where $x_1,\dots,x_m$ are labels of arcs except $a,b,c$, and $I_{a,b}$ is a set of relations except $ba-ac,ab-ca$. Let 
\begin{equation*}
\begin{split}
&V=\langle x_1,\dots,x_m,a,b,c\rangle/\langle I_{a,b}\cup\left\{\begin{matrix}ba-ac \\ ab-ca \end{matrix}\right\}\rangle, \\
&V^{\prime}=\langle x_1,\dots,x_m,a,b\rangle/\langle I_{a,b}\rangle.
\end{split}
\end{equation*}
Then we have the following exact sequences for $n\geq 2$.
\begin{equation*}
\begin{split}
0\rightarrow \langle c-1\rangle^n \rightarrow V^n \rightarrow \left(V^n/\langle c-1\rangle \right)^n\rightarrow 0, \\
0\rightarrow \langle b-1\rangle^n \rightarrow V^n \rightarrow \left(V^n/\langle b-1\rangle \right)^n\rightarrow 0. 
\end{split}
\end{equation*}
Since 
\begin{equation*}
\begin{split}
&f_V(n)=\dim V^n=\dim (V/\langle c-1\rangle)^n+\langle c-1\rangle^n, \\
&f_{V^{\prime}}(n)=\dim {V^{\prime}}^n=\dim (V^{\prime}/\langle b-1\rangle)^n+\langle b-1\rangle^n,
\end{split}
\end{equation*}
and
\begin{equation*}
(V/\langle c-1\rangle)^n\simeq (V^{\prime}/\langle b-1\rangle)^n,
\end{equation*}
we have
\begin{equation*}
f_V(n)=f_{V^{\prime}}(n) \text{ for all } n\geq 2.
\end{equation*}
Let $c$ be a real number such that 
\begin{equation*}
f_V(n),f_{V^{\prime}}(n)\leq c.
\end{equation*}
Let 
\begin{equation*}
R=\inf\{r\mid f_V(n)\leq cn^r\},R^{\prime}=\inf\{r\mid f_V^{\prime}(n)\leq cn^r\}.
\end{equation*}
Suppose $R<R^{\prime}$. Since ${\rm{GK}}\dim(k(M(L)))=r$ is finite, there exists a real number $c^{\prime}$ such that
\begin{equation*}
f_{V^{\prime}}(n)=f_V(n)\leq c^{\prime}n^r.
\end{equation*}
Thus we have
\begin{equation*}
f_{V^{\prime}}(1)\leq \max\{c,c^{\prime}\}, f_{V^{\prime}}(n)\leq \max\{c,c^{\prime}\}n^r.
\end{equation*}
Since $R<R^{\prime}$, this contradicts $R^{\prime}=\inf\{r\mid f_V^{\prime}(n)\leq cn^r\}$. Therefore, 
\begin{equation*}
{\rm{GK}}\dim(k(M(L)))={\rm{GK}}\dim(k(M(L^{\prime}))).
\end{equation*}
Thus ${\rm{GK}}\dim(k(M(L)))$ is invariant under the Reidemeister move ${\rm{I\hspace{-.1em}I}}$. Next let $L,L^{\prime}$ be the same links, except in the neighborhood of a point where they are as shown in Figure.
\begin{center}
\includegraphics[keepaspectratio,width=5cm,bb=0 0 250 270]{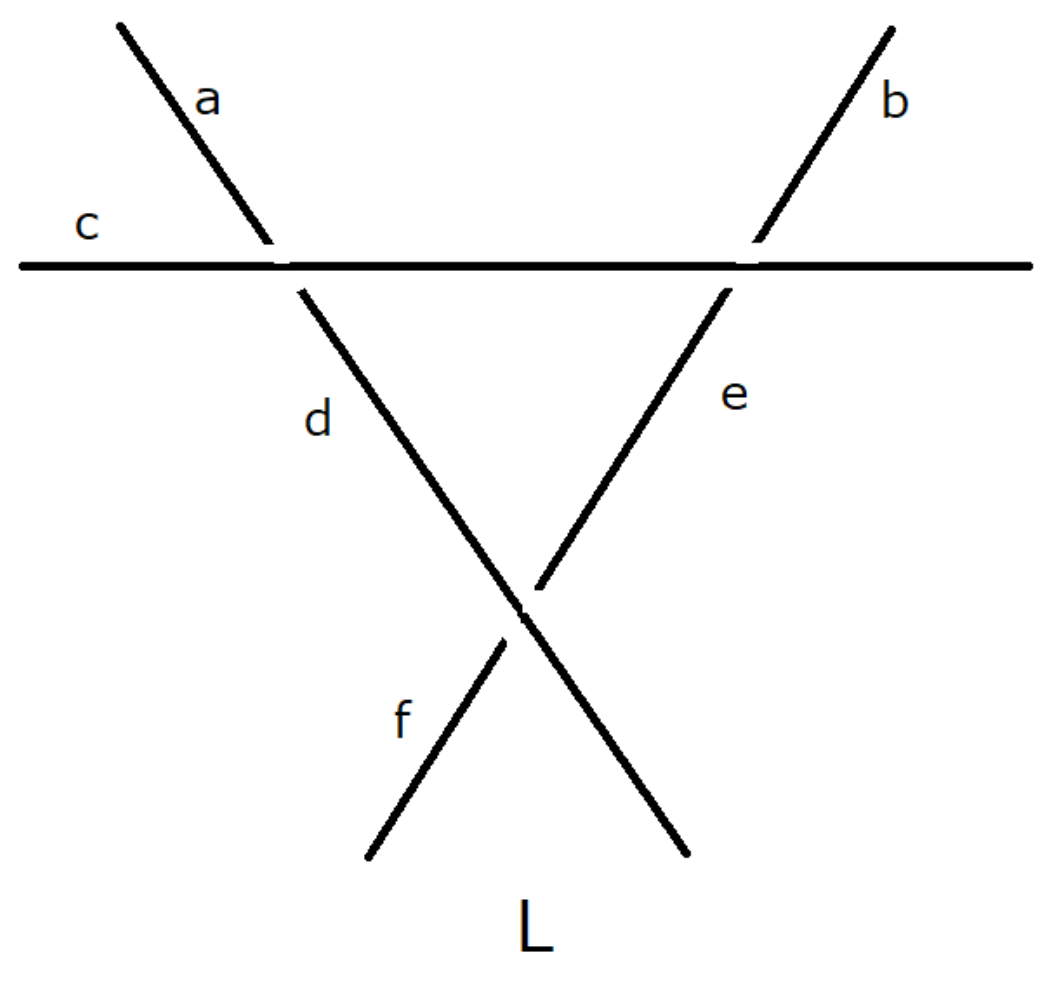} \,\,\,
\includegraphics[keepaspectratio,width=5cm,bb=0 0 250 230]{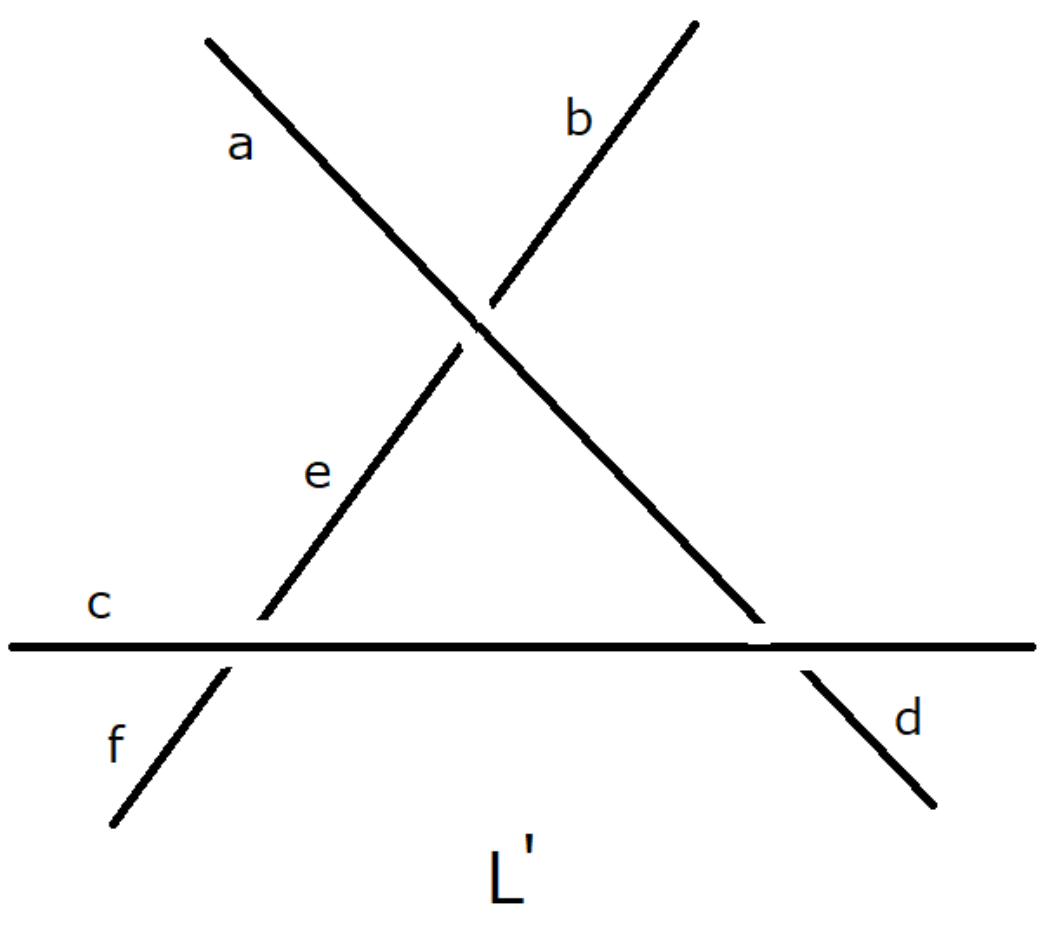}
\end{center}
We can assume ${\rm{GK}}\dim(k(M(L))),{\rm{GK}}\dim(k(M(L^{\prime})))<\infty$. Then 
\begin{equation*}
\begin{split}
&k(M(L))\simeq k\langle x_1,\dots,x_m,a,b,c,d,e,f\rangle/\langle I_{a,b,c,d,e,f}\cup J\rangle, \\
&k(M(L^{\prime}))\simeq k\langle x_1,\dots,x_m,a,b\rangle/\langle I^{\prime}_{a,b,c,d,e,f}\cup J^{\prime} \rangle,
\end{split}
\end{equation*}
where 
\begin{equation*}
\begin{split}
&J=\left\{
\begin{matrix}
ac-cd \\ ca-dc
\end{matrix}\right\}\cup
\left\{\begin{matrix}
bc-ce \\ cb-ec
\end{matrix}
\right\}\cup
\left\{\begin{matrix}
fd-de \\ df-ed
\end{matrix}
\right\}, \\
&J^{\prime}=\left\{
\begin{matrix}
ea-ab \\ ae-ba
\end{matrix}\right\}\cup
\left\{\begin{matrix}
fc-ce \\ cf-ec
\end{matrix}
\right\}\cup
\left\{\begin{matrix}
dc-cf \\ cd-fc
\end{matrix}
\right\},
\end{split}
\end{equation*}
and $x_1,\dots,x_m$ are labels of arcs except $a,b,c,d,e,f$, and $I_{a,b,c,d,e,f},I^{\prime}_{a,b,c,d,e,f}$ is a set of relations except $J,J^{\prime}$.
\begin{equation*}
\begin{split}
&V=\langle x_1,\dots,x_m,a,b,c,d,e,f\rangle/\langle I_{a,b,c,d,e,f}\cup J\rangle, \\
&V^{\prime}=\langle x_1,\dots,x_m,a,b,c,d,e,f\rangle/\langle I^{\prime}_{a,b,c,d,e,f}\cup J^{\prime}\rangle.
\end{split}
\end{equation*}
Then we have the following exact sequences for $n\geq 2$.
\begin{equation*}
\begin{split}
0\rightarrow \langle a-1\rangle^n \rightarrow V^n \rightarrow \left(V^n/\langle a-1\rangle \right)^n\rightarrow 0, \\
0\rightarrow \langle e-1\rangle^n \rightarrow V^n \rightarrow \left(V^n/\langle e-1\rangle \right)^n\rightarrow 0. 
\end{split}
\end{equation*}
Since 
\begin{equation*}
\begin{split}
&f_V(n)=\dim V^n=\dim (V/\langle a-1\rangle)^n+\langle a-1\rangle^n, \\
&f_{V^{\prime}}(n)=\dim {V^{\prime}}^n=\dim (V^{\prime}/\langle e-1\rangle)^n+\langle e-1\rangle^n,
\end{split}
\end{equation*}
and
\begin{equation*}
(V/\langle a-1\rangle)^n\simeq (V^{\prime}/\langle e-1\rangle)^n,
\end{equation*}
we have
\begin{equation*}
f_V(n)=f_{V^{\prime}}(n) \text{ for all } n\geq 2.
\end{equation*}
Then we can prove that ${\rm{GK}}\dim(k(M(L)))={\rm{GK}}\dim(k(M(L^{\prime})))$ by the similar way of the case of  the Reidemeister move ${\rm{I\hspace{-.1em}I}}$. Therefore ${\rm{GK}}\dim(k(M(L)))$ is invariant under the Reidemeister move {\rm{I\hspace{-.1em}I\hspace{-.1em}I}}.
\end{proof}
\subsection{Examples}
\begin{example}
Let $L_1$ be a Hopf link (Figure 2). Since 
\begin{equation*}
\begin{split}
k(M(L_1))&\simeq k\langle a,b\rangle/\langle ab-ba\rangle \\
&\simeq k[a,b],
\end{split}
\end{equation*}
${\rm{GK}}\dim(k(M(L_1)))=2$. On the other hand let $L_2$ be the following link.
 \begin{center}
\includegraphics[keepaspectratio,width=5cm,bb=0 0 250 160]{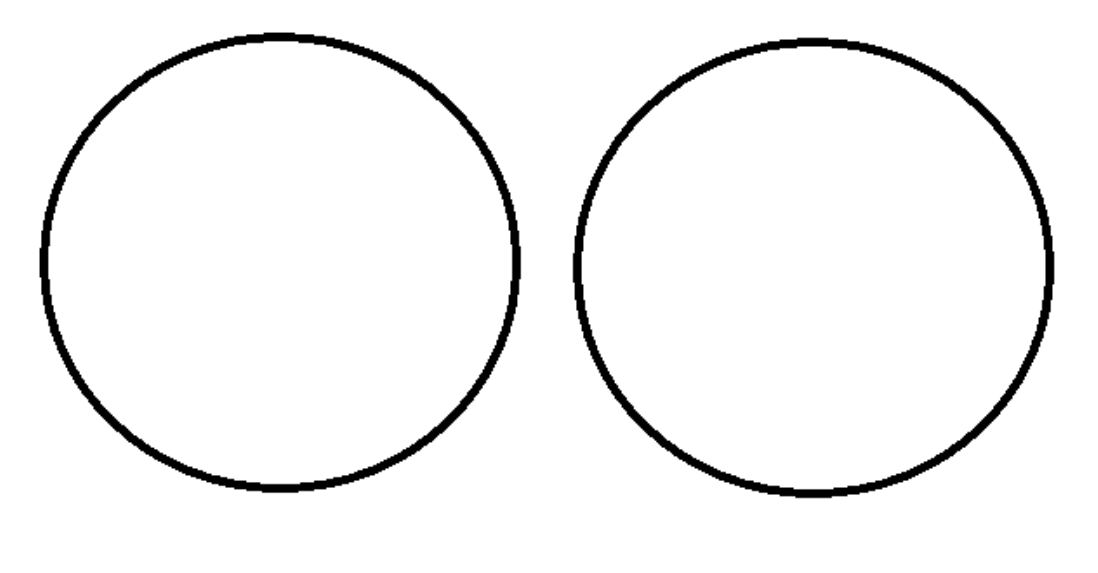}
\end{center}
Since 
\begin{equation}
k(M(L_2))\simeq k\langle a,b\rangle,
\end{equation}
${\rm{GK}}\dim(k(M(L_1)))=\infty$. Therefor we can conclude that $L_1\not\simeq L_2$.
\end{example}
\begin{example}
We shall consider torus knots $T(2,n)$ and double twist knots $\mathfrak{dtw}_m^l$. Let $V$ be a generating subspace of $k(M(T(2,n)))$. Then by Theorem \ref{torusthm}, 
\begin{equation*}
f_V(d)=\dim V^d=nd+1.
\end{equation*}
Thus ${\rm{GK}}\dim(k(M(T(2,n))))=1$. Let $V^{\prime}$ be a generating subspace of $k(M(\mathfrak{dtw}_m^l))$. The by theorem \ref{main},
\begin{equation*}
f_{V^{\prime}}(d)=
\begin{cases}
1 & d=0 \\
(lm+1)d+m+l-lm & d>0\,\,.
\end{cases}
\end{equation*}
Thus ${\rm{GK}}\dim(k(M(\mathfrak{dtw}_m^l)))=1$.
\end{example}

\end{document}